\newcommand{\weg}[1]{}
\newcommand{\bq}{\begin{equation}}
\newcommand{\eq}{\end{equation}}
\renewcommand{\d}{\mathrm{d}}
\newcommand{\R}{\mathbb{R}}
\newcommand{\Id}{\mathrm{Id}}
\newcommand{\A}{\mathcal{A}}
\newtheorem{thm}{Theorem}
\newtheorem{lem}[thm]{Lemma}
\newtheorem{cor}[thm]{Corollary}
\newtheorem{prop}[thm]{Proposition}
\theoremstyle{definition}
\newtheorem{rem}{Remark}
\newtheorem{ex}{Example}
\newtheorem*{defn}{Definition}
\title[The degree of mobility of Einstein metrics]
{The degree of mobility of Einstein metrics} 
  \author{Vladimir S. Matveev and Stefan Rosemann}
\address{Institute of Mathematics, Friedrich-Schiller-Universit\"at Jena, 07737 Germany.}
\email{vladimir.matveev@uni-jena.de,   stefan.rosemann@uni-jena.de}
\email{}
\begin{document}

\begin{abstract}
Two pseudo-Riemannian metrics are called projectively equivalent if their unparametrized 
geodesics coincide. The degree of mobility of a metric is the dimension of the space of metrics 
that are projectively equivalent to it. We give a complete  list of  possible values  for  the degree of 
mobility of Riemannian and Lorentzian Einstein metrics on simply connected manifolds, and  describe all
possible dimensions of the space of essential projective vector fields.
\end{abstract}
\maketitle


\section{Introduction}

The aim of this article is to study Einstein metrics (i.e., such that the  Ricci curvature is proportional to the metric) 
of Riemannian and Lorentzian  signature  in the realm of  projective geometry. 

Recall that two pseudo-Riemannian metrics $g$ and $\bar g$ on a manifold $M$ are called
\emph{projectively equivalent}\footnote{The notions ``geodesically equivalent'' or ``projectively related'' are also common.}
if their unparametrized geodesics coincide. Clearly, any constant multiple of $g$ 
 is projectively equivalent to $g$. A generic metric does not admit other examples of projectively equivalent 
metrics, see \cite{MatGenRel}. 
If two metrics $g,\bar g$ are \emph{affinely equivalent}, that is, if their Levi-Civita connections coincide, 
then they are also projectively equivalent. 
Affinely equivalent metrics are well-understood at least in Riemannian \cite{deRham,ei} and Lorentzian signature \cite{Sol,Petrov}, 
see also Lemma \ref{lem:decomp} below.   The case of arbitrary signature is   much more complicated,  see \cite{Sol}
or the more recent article \cite{Boubel} for a local description of all such metrics.

The theory of projectively equivalent metrics has a long and rich history   -- we refer to the introductions 
of \cite{KioMatEinstein,MatHyper} or to survey \cite{mikes}  for more details, and focus on   Einstein metrics in what follows.

 Einstein metrics are very natural objects in projective geometry. 
For instance, as shown in \cite{KioMatEinstein},  the property of a metric $g$ to be Einstein is projectively  
invariant in the following  sense:   
any metric that  projectively equivalent and not affinely equivalent to an Einstein metric  is also Einstein. 
A more educated point of view on the whole subject is the following: 
a projective geometry, given by a class of projectively equivalent connections (not necessarily Levi-Civita 
connections), is an example of a parabolic geometry, a special case of a Cartan geometry, see the 
monographs \cite{CapBook,Sharpe}. As shown in \cite{EastMat}, the metrics with Levi-Civita connection 
contained in the given projective class are in one-one correspondence to solutions of a certain overdetermined system of 
partial differential equations. This system  is a so-called first Bernstein-Gelfand-Gelfand equation 
\cite{CalBGG,CapBGG} and, as shown in \cite{CapGover}, Einstein metrics correspond 
to a special class of solutions called normal.

The \emph{degree of mobility $D(g)$} of a pseudo-Riemannian metric $g$ is   the dimension of 
the space of $g$-symmetric solutions of the PDE \eqref{eq:main}. As we explain in Section \ref{sec:basic},   
nondegenerate solutions of \eqref{eq:main} are in one-to-one correspondence with the metrics projectively 
equivalent to $g$. Hence, intuitively, $D(g)$ is the dimension of the space of metrics projectively equivalent to $g$. 

We have $D(g)=1$ for a generic metric $g$ and $D(g)\geq 2$ if $g$ admits a projectively equivalent metric that is 
nonproportional to $g$. As our main result, we determine all possible values for the degree of mobility $D(g)$ of 
Riemannian and Lorentzian Einstein metrics, locally or on simply connected\footnote{By definition, simply connectedness 
implies connectedness.}  manifolds. Let us denote by ``$[\alpha]$'' the
integer part of a real number $\alpha$.

\begin{thm}\label{thm:main}
Let $(M,g)$ be a simply connected Riemannian or Lorentzian Einstein manifold of dimension $n\geq 3$. 
Suppose $g$ admits a projectively equivalent
but not affinely equivalent metric. 

Then, the degree of mobility $D(g)$ is one of the numbers $\geq 2$ from the following list:
\begin{itemize}
\item $\frac{k(k+1)}{2}+l$, where $n\geq 5$, $0\leq k\leq n-4$ and $1\leq l\leq [\frac{n+1-k}{5}]$
for $g$ Riemannian and Lorentzian.\vspace{1mm}
\item $\frac{k(k+1)}{2}+l$, where $n\geq 5$, $k=n-3\mbox{ mod }5$, $2\leq k\leq n-3$ and $l=[\frac{n+2-k}{5}]$
for $g$ Lorentzian.\vspace{1mm}
\item $\frac{(n+1)(n+2)}{2}$.
\end{itemize}
Conversely, for $n\geq 3$ and each number $D\geq 2$ from this list, there exist  simply connected 
$n$-dimensional Riemannian resp. Lorentzian Einstein manifolds admitting   projectively equivalent  but 
not affinely equivalent metrics and such that  $D$ is the degree of mobility $D(g)$.
\end{thm}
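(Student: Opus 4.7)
My plan splits the argument into the forward direction (showing every such $D(g)$ lies in the list) and the converse direction (realising each listed value by explicit examples). The starting point is the correspondence recalled in the introduction: $D(g)$ equals the dimension of the space of $g$-symmetric solutions of the PDE~\eqref{eq:main}, and nondegenerate solutions correspond to projectively equivalent metrics. A solution, viewed via $g$ as an endomorphism $A$ of $TM$, has algebraic data (eigenspaces, Jordan blocks) whose interplay with the Einstein equation governs both the size of the solution space and the global geometry of $(M,g)$.

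For the forward direction, the first step is to invoke Lemma~\ref{lem:decomp} on the simply connected universal cover to decompose $(M,g)$ as an affine product of a constant sectional curvature factor of some dimension $k$ (possibly zero) with an ``essentially projective'' Einstein factor of dimension $n-k$ that admits a projectively but not affinely equivalent metric. The solution space of~\eqref{eq:main} decomposes accordingly, with the constant curvature factor contributing $\frac{k(k+1)}{2}$ and the essentially projective factor contributing some $l\geq 1$; the maximal case $\frac{(n+1)(n+2)}{2}$ of the third bullet corresponds precisely to $g$ itself having constant sectional curvature. The main obstacle is then to establish the sharp bound $l\leq [\frac{n+1-k}{5}]$ in the Riemannian case, together with the exceptional Lorentzian values of the second bullet. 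I would approach this by prolonging~\eqref{eq:main} to a closed first-order system, coupling it with the Einstein equation, and reading off algebraic constraints on the Jordan form of $A$: in Riemannian signature $A$ must be diagonalisable with eigenvalue multiplicities forcing the denominator $5$, while in Lorentzian signature an additional nondiagonalisable block is permitted, producing the second-bullet family under the congruence $k\equiv n-3\pmod 5$.

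For the converse direction, each admissible pair $(k,l)$ must be realised by an explicit construction. The third bullet is realised by spaces of constant sectional curvature (spheres, hyperbolic space, (anti-)de~Sitter space). Each $(k,l)$ of the first bullet is realised by a Riemannian (respectively Lorentzian) affine product of a $k$-dimensional constant sectional curvature factor with an $(n-k)$-dimensional indecomposable Einstein metric of degree of mobility $l$, built via the classical Levi-Civita/Sinyukov construction. The second bullet is realised analogously but using Lorentzian-only pp-wave-type Einstein building blocks that exploit the indecomposable but non-irreducible structure available in signature $(1,n-1)$. The principal task for the existence side is to check that each such construction attains exactly the claimed value of $D(g)$, which amounts to a dimension count matching the algebraic bound established in the forward direction.
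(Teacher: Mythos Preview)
Your proposal misses the central construction of the paper and misinterprets the integers $k$ and $l$ in the formula $D(g)=\tfrac{k(k+1)}{2}+l$. You propose to decompose $(M,g)$ itself as an affine product of a $k$-dimensional constant sectional curvature factor with an $(n-k)$-dimensional ``essentially projective'' Einstein factor, and to read $k,l$ off this splitting. This does not work: an Einstein metric with nonzero scalar curvature admitting a projectively but not affinely equivalent metric is typically irreducible as a Riemannian product (its de~Rham factors, if any, would all be Einstein with the same nonzero Einstein constant, not a flat/constant curvature piece plus a remainder), and the solution space of~\eqref{eq:main} does not split along a product decomposition of $M$ in the additive way you describe. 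Concretely, in Example~\ref{ex:counterex1} one has $n=5$, $D(g)=4$, coming from $k=2$, $l=1$; there is no $2$-dimensional constant curvature factor of that metric.

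The paper's actual mechanism is the cone correspondence (Lemma~\ref{lem:isom}): after rescaling so that $B=-1$ in the extended system~\eqref{eq:extsys}, solutions of~\eqref{eq:extsys} are in bijection with parallel symmetric $(0,2)$-tensors on the metric cone $(\hat M,\hat g)=(\R_{>0}\times M,\ \d r^2+r^2 g)$, and the Einstein condition on $g$ becomes Ricci-flatness of $\hat g$. Then $k$ is the number of linearly independent parallel vector fields on the cone and $l$ the number of indecomposable holonomy components of the cone. The denominator $5$ is not an eigenvalue-multiplicity phenomenon for $A$; it is Lemma~\ref{lem:dimRicciflat}: a nonflat Ricci-flat local cone has dimension at least $5$ (since a $3$-dimensional Einstein base has constant curvature). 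The exceptional Lorentzian family comes from cones of signature $(n-1,2)$ admitting two independent parallel null vector fields in a single indecomposable $6$-dimensional component (Example~\ref{ex:counterex}), not from a pp-wave block on $M$. Your prolongation/Jordan-form sketch does not produce these dimension bounds, and the realisation part must likewise be carried out on the cone side (building Ricci-flat cones as products of indecomposable Ricci-flat cones of dimension $\geq 5$), not by taking affine products on $M$.
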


\begin{figure}
  \includegraphics[width=.7\textwidth]{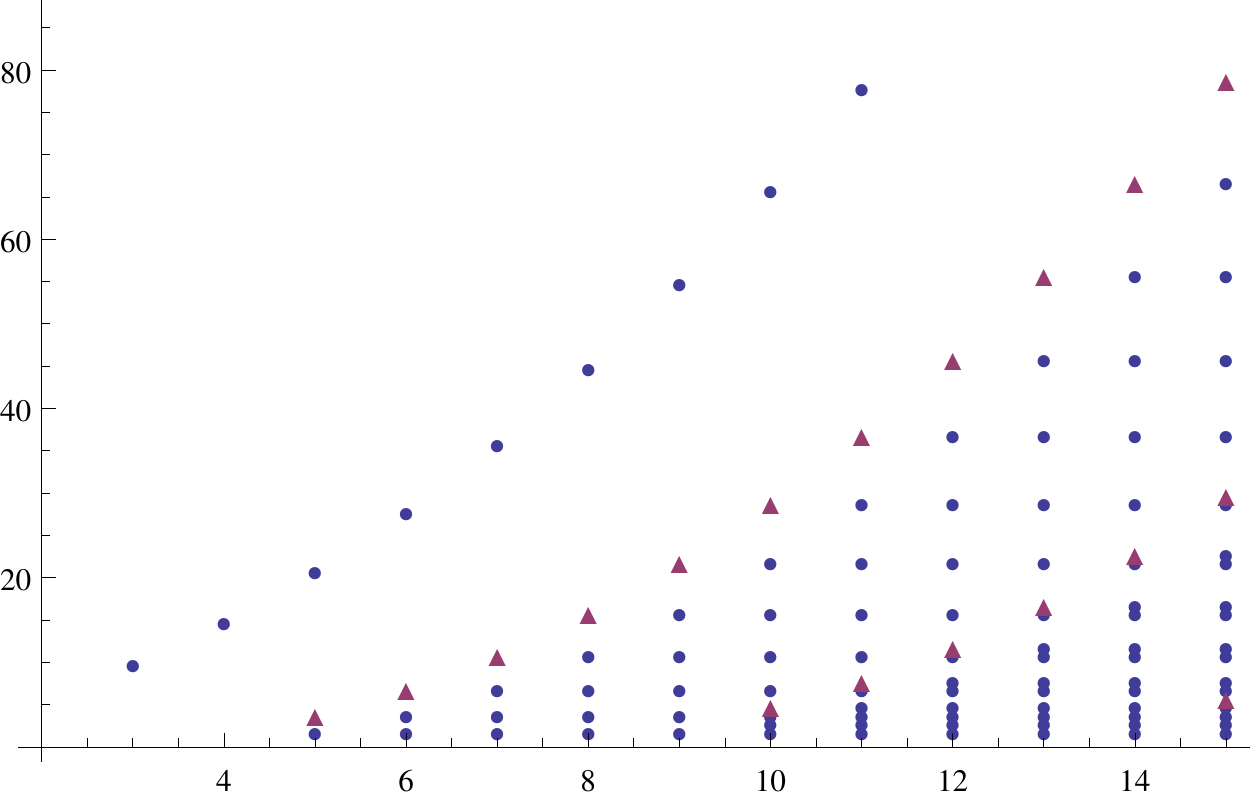}
  \caption{ Degree of mobility $D(g)$ from Theorem \ref{thm:main} for $3\leq \mathrm{dim}\,M\le 15$. 
	The triangles denote the additional values for Lorentz signature.}\label{1}
\end{figure}  

In Theorem \ref{thm:main}, the degree of mobility is at least $2$ since we assumed that
$g$ admits a metric $\bar g$ projectively equivalent to $g$ but not affinely equivalent to it. 
Suppose this assumption is dropped, that is, let us assume all metrics projectively equivalent 
to $g$ are affinely equivalent to it. In this case the complete list of   possible values of the 
degree of mobility of $g$ can be easily obtained by combining Lemma \ref{lem:decomp} below with  
methods similar to the ones used in Section \ref{sec:Bnonzero} and Section \ref{sec:Bzeromuzero}. 
It is  
$$
\{k(k+1)/2+l:0\leq k\leq n-2,1\leq l\leq [(n-k)/2]\}\cup\{n(n+1)/2\}
$$
if $g$ is Einstein with nonzero scalar curvature and  
$$
\{k(k+1)/2+l:0\leq k\leq n-4,1\leq l\leq [(n-k)/4]\}\cup\{n(n+1)/2\}
$$
 if $g$ is Ricci flat.

It is well-known, see e.g. \cite[p.134]{Sinjukov},  that if $D(g) $  is equal to its  
maximal value $(n+1)(n+2)/2$, then $g$ has constant sectional curvature. Conversely, 
this value is attained on simply connected manifolds of constant sectional curvature.
In view of this, the case  $n= 3$   in Theorem \ref{thm:main} is trivial,  since 
a $3$-dimensional  Einstein metric  has constant 
sectional curvature  and its  degree of mobility takes the maximum value $D(g)=10$.

For $4$-dimensional Einstein metrics, we obtain the following statement
as an immediate consequence of Theorem \ref{thm:main} (compare also Figure \ref{1}):
\begin{cor}\label{cor:main}
Let $(M,g)$ be a $4$-dimensional Riemannian or Lorentzian Einstein manifold. 
Suppose $\bar g$ is projectively equivalent to $g$ but not affinely equivalent. 
Then, $g$  has constant sectional curvature. 
\end{cor}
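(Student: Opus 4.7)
The plan is to deduce Corollary \ref{cor:main} directly from Theorem \ref{thm:main} by enumerating which entries in the list of admissible values of $D(g)$ can occur in dimension $n=4$. Since having constant sectional curvature is a pointwise condition, I would first pass to the universal cover of $M$ and thereby assume $M$ is simply connected without loss of generality; the lift of $\bar g$ remains projectively but not affinely equivalent to the lift of $g$, so the hypotheses of Theorem \ref{thm:main} are met on the cover, and the conclusion then descends to $M$.

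Next I would substitute $n=4$ into each of the three bullet points of Theorem \ref{thm:main}. The first bullet explicitly requires $n\geq 5$ and therefore contributes nothing. The second bullet, which provides the extra Lorentzian values, also requires $n\geq 5$ and hence contributes nothing. Only the third bullet survives, yielding the single admissible value $D(g)=\tfrac{(n+1)(n+2)}{2}=15$. In particular, under the hypotheses of the corollary, the degree of mobility necessarily attains its maximum.

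Finally I would invoke the classical fact cited in the excerpt from Sinjukov's monograph (p.~134): if $D(g)$ attains its maximum value $\tfrac{(n+1)(n+2)}{2}$, then $g$ has constant sectional curvature. Combined with the previous step, this gives the desired conclusion for $(M,g)$.

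There is essentially no genuine obstacle here: Corollary \ref{cor:main} is a pure bookkeeping consequence of Theorem \ref{thm:main} and the classical characterisation of metrics of maximal mobility. The only very mild subtlety is the descent from the universal cover, which is immediate because constant sectional curvature is a pointwise condition and is thus preserved under local isometries.
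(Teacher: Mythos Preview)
Your proposal is correct and is exactly the argument the paper has in mind: the corollary is stated as an ``immediate consequence of Theorem~\ref{thm:main}'', obtained by observing that for $n=4$ the first two bullets are vacuous and only $D(g)=(n+1)(n+2)/2$ survives, which forces constant sectional curvature by the classical result of Sinjukov. Your passage to the universal cover is a sensible addition, since Theorem~\ref{thm:main} assumes simple connectedness while the corollary as stated does not.
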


Corollary \ref{cor:main} was known before, see  \cite[Theorem 2]{KioMatEinstein} (or, alternatively, \cite{Hall2}), 
and it is actually true for metrics of arbitrary signature. However, our methods for proving Theorem \ref{thm:main}  and 
Corollary \ref{cor:main} are different from that  used in \cite{Hall2,KioMatEinstein} 
(although we will rely on some statements from \cite{KioMatEinstein}). A special case of Corollary \ref{cor:main}
was also considered in \cite{Petrov}   where it was proven that $4$-dimensional Ricci flat   nonflat   metrics cannot be 
projectively equivalent unless they are affinely equivalent. This result was generalized to Einstein metrics of arbitrary 
scalar curvature  in \cite{Hall1}. 
 Note that  by \cite[Theorem 1]{KioMatEinstein}, 
the statement of Corollary \ref{cor:main} survives for arbitrary dimension under the assumption  that both 
 metrics are geodesically complete. 

Projective equivalence of Lorentzian Einstein metrics, in particular, the problem we have 
investigated, was actively studied  in general relativity, see the classical 
references \cite{eiGR,eiBook,Weyl1} and the more recent articles \cite{Hall1,Hall2,MatGenRel}.
The motivation to study this problem is based on the description of trajectories of freely falling particles in vacuum 
as unparametrized geodesics of a Lorentzian Einstein  metric. The initial question, studied in \cite{Ehlers,Petrov,Weyl1},
is whether and under what conditions one can reconstruct the spacetime metric by only observing freely falling particles.
We study the   `freedom'  of such a reconstruction: the number of parameters  is  given by Theorem \ref{thm:main}. 

We see from Theorem \ref{thm:main} that the list for the values of the degree of mobility for Riemannian Einstein metrics 
is strictly smaller than the list for Lorentzian Einstein metrics. This difference starts in dimension five:
for a $5$-dimensional Riemannian Einstein metric $g$ we have $D(g)=1,2$ or $g$ has constant sectional curvature (i.e., $D(g)=21$). 
However, according to Theorem \ref{thm:main}, there exist $5$-dimensional Lorentzian Einstein metrics having $D(g)=4$. 
For instance, consider
\begin{ex}\label{ex:counterex1}
The nonconstant curvature metric
$$
g=\d t^2+e^{2t}(\d x_0\odot \d x_1+e^{x_2}\mathrm{sin}(x_3)\d x_1^2+\d x_2^2+\d x_3^2)
$$
on $M=\R^5$ (with coordinates $(t,x_0,x_1,x_2,x_3)$) is Einstein with scalar curvature $20$ and has signature $(1,4)$. 
In addition to $g$, the following symmetric $(0,2)$-tensors are solutions of equation \eqref{eq:main}:
$$
L_1=e^{2t}\d t^2,\,\,\,L_2=e^{2t}(x_1\d t+\d x_1)^2,\,\,\,L_3=e^{2t}\d t\odot (x_1\d t+\d x_1).
$$
\end{ex}

Without the assumption that the metric is Einstein, an analogue of Theorem \ref{thm:main} is  \cite[Theorem 1]{FedMat}.
Obviously, the values obtained in Theorem \ref{thm:main} are contained in the list of \cite[Theorem 1]{FedMat}, but our  list
is of course  thinner: not every value from \cite[Theorem 1]{FedMat} can be realized as the degree of mobility of an 
Einstein metric. We suggest to compare Figure \ref{1} above with \cite[Fig. 1]{FedMat}. 

Note also that most experts (including us) expected that the list for the values of the degree of mobility
should not depend on the signature. This is true (at least when comparing Riemannian and Lorentzian signature)
if we do consider general metrics (not necessarily Einstein), see \cite[Theorem 1]{FedMat}.
As stated in Theorem \ref{thm:main}, it is not true when we consider Einstein metrics, see also Example \ref{ex:counterex1} above.

Note that if the manifold is closed, the list of possible values for the degree of mobility is  much shorter. Indeed, by 
\cite{KioMatEinstein,MatMoun}, a metric that is 
 projectively equivalent to an Einstein metric of nonconstant sectional curvature on a closed manifold
  is affinely equivalent to it.

\subsection{Application: the dimension of the space of essential projective vector fields}

Let $(M,g)$ be a pseudo-Riemannian manifold. A diffeomorphism  $f:M\rightarrow M$ is called 
a \emph{projective transformation} if it maps unparametrized geodesics to unparametrized geodesics or, 
equivalently, if $f^{*}g$ is projectively equivalent to $g$. The isometries of $g$ are clearly  projective 
transformations. A projective transformation is called \emph{essential} if it is not an isometry of the metric. 

A vector field $v$ on   $(M,g)$ is called \emph{projective} if its local flow
consists of projective transformations. A projective vector field is called \emph{essential} if it is not 
a Killing vector field. 

Let $\mathfrak{p}(g)$ and $\mathfrak{i}(g)$ denote the vector spaces (in fact, Lie algebras) of projective
and Killing vector fields respectively. The quotient $\mathfrak{p}(g)/\mathfrak{i}(g)$ will be referred to as the 
\emph{space of essential projective vector fields}. In the generic case, see Remark \ref{rem:essprojvf}
below, this space can be naturally identified with a subspace (thought, not a subalgebra) of $\mathfrak{p}(g)$.

We determine all possible values for the dimension of the space of essential projective vector fields of 
a Riemannian or Lorentzian Einstein metric:
\begin{thm}\label{thm:proj}
Let $(M,g)$ be a simply connected Riemannian or Lorentzian Einstein manifold of dimension $n\geq 3$
 which admits a metric that is projectively equivalent but not affinely equivalent to $g$. 
Then, the possible values for the dimension of the space of essential projective vector fields are given by the 
numbers $\geq 1$ from the following list:
\begin{itemize}
\item $\frac{k(k+1)}{2}+l-1$, where $n\geq 5$, $0\leq k\leq n-4$ and $1\leq l\leq [\frac{n+1-k}{5}]$
for $g$ Riemannian and Lorentzian.\vspace{1mm}
\item $\frac{k(k+1)}{2}+l-1$, where $n\geq 5$, $k=n-3\mbox{ mod }5$, $2\leq k\leq n-3$ and $l=[\frac{n+2-k}{5}]$
for $g$ Lorentzian.\vspace{1mm}
\item $\frac{(n+1)(n+2)}{2}-1$.
\end{itemize}
Conversely, for $n\geq 3$ and each number $\geq 1$ from this list, there exists a $n$-dimensional simply connected 
Riemannian resp. Lorentzian Einstein metric admitting a projectively equivalent but not affinely equivalent metric and
for which this number is the dimension of the space of essential projective vector fields.
\end{thm}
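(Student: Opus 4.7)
The plan is to reduce Theorem~\ref{thm:proj} to Theorem~\ref{thm:main} by establishing, under the hypotheses of the theorem, the identity
$$
\dim\bigl(\mathfrak{p}(g)/\mathfrak{i}(g)\bigr)=D(g)-1.
$$
Once this identity is known, the list of allowed values in Theorem~\ref{thm:proj} is obtained from the list in Theorem~\ref{thm:main} by subtracting $1$ from each entry (the bound $\ge 1$ here matching the bound $\ge 2$ there), and the realization statement is inherited verbatim from the explicit models constructed for Theorem~\ref{thm:main}.

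To establish the identity I would build mutually inverse isomorphisms between $\mathcal{A}(g)/\mathbb{R}\cdot g$ and $\mathfrak{p}(g)/\mathfrak{i}(g)$. In one direction, a projective vector field $v$ produces a solution of \eqref{eq:main} via a trace-corrected Lie derivative of $g$; this is a standard consequence of the first BGG formalism and has kernel exactly $\mathfrak{i}(g)$. In the other direction, to a solution $L\in\mathcal{A}(g)$ one associates its BGG companion 1-form $\lambda_a\propto \nabla^b L_{ab}$ and the vector field $\lambda^\sharp$; a Ricci-identity computation that uses the Einstein condition essentially (for generic metrics $\lambda^\sharp$ need not be projective) shows $\lambda^\sharp\in\mathfrak{p}(g)$. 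One then checks that the two compositions coincide with the identity modulo $\mathbb{R}\cdot g$ and modulo $\mathfrak{i}(g)$ respectively, yielding the desired isomorphism.

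The main obstacle is showing that the kernel of $L\mapsto\lambda^\sharp$ is exactly $\mathbb{R}\cdot g$. This kernel consists of solutions $L$ with $d(\mathrm{tr}_g L)=0$, equivalently of $\nabla$-parallel solutions of \eqref{eq:main}, which correspond precisely to metrics affinely equivalent to $g$. To collapse this space down to the line $\mathbb{R}\cdot g$ one has to combine Lemma~\ref{lem:decomp} with the standing assumption that $g$ admits a projectively but not affinely equivalent metric, and then inspect each case of the local classification used in the proof of Theorem~\ref{thm:main} to rule out nontrivial parallel symmetric $(0,2)$-tensors. This step is where the Einstein assumption and the structural results underlying Theorem~\ref{thm:main} do most of the work.

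As a sanity check, in the maximal case $D(g)=\tfrac{(n+1)(n+2)}{2}$ the metric has constant sectional curvature and the identity reproduces the classical count $\dim\mathfrak{sl}(n+1,\mathbb{R})-\dim\mathfrak{o}(n+1)=\tfrac{n(n+3)}{2}=\tfrac{(n+1)(n+2)}{2}-1$, with the obvious modification in Lorentzian signature; this matches the third bullet of Theorem~\ref{thm:proj}. At the other extreme, the minimal case $D(g)=2$ produces a single essential projective vector field, as expected from the hypothesis that $g$ admits a projectively but not affinely equivalent metric. Together these endpoints indicate that the proposed reduction is consistent with the full list.
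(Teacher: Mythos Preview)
Your reduction to the identity $\dim(\mathfrak{p}(g)/\mathfrak{i}(g))=D(g)-1$ is the right idea when the scalar curvature is nonzero, and the paper proceeds exactly this way in that case. But the identity is \emph{false} in general: the paper explicitly notes (just after the statement of Theorem~\ref{thm:proj}) that there exist Lorentzian Einstein metrics with $\dim(\mathfrak{p}(g)/\mathfrak{i}(g))<D(g)-1$. So your proposed argument cannot go through as stated.

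The breakdown occurs at both ends of your would-be isomorphism. First, the kernel of $\varphi:\mathfrak{p}(g)\to\mathcal{A}(g)/\R\cdot g$ is the space $\mathfrak{h}(g)$ of homotheties, not $\mathfrak{i}(g)$; these coincide only when $B\ne 0$. Second, and more seriously, the kernel of $L\mapsto\Lambda^\sharp$ is the space of $\nabla$-parallel solutions, and when $B=0$ the system \eqref{eq:extsys} imposes no constraint forcing a parallel $L$ to be proportional to $g$. In the Lorentzian Ricci-flat case treated in Proposition~\ref{prop:Bzeromuzero}, $\mathcal{A}(g)$ is one-dimensional \emph{plus} the entire space $\mathrm{Par}^{0,2}(g)$ of parallel symmetric tensors, and the latter can be large. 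Your proposed ``collapse to $\R\cdot g$'' step therefore fails precisely here.

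The paper handles this problematic case (Lorentzian, $B=0$, $\mu=0$ for every solution) not by proving the identity but by establishing the two-sided bound $D(g)-2\le \dim(\mathfrak{p}(g)/\mathfrak{i}(g))\le D(g)-1$ and then observing that, thanks to the range $2\le l\le[\tfrac{n+1-k}{5}]$ in Proposition~\ref{prop:Bzeromuzero}, both $D(g)-1$ and $D(g)-2$ lie in the target list. The intermediate case ($B=0$ but some $\mu\ne 0$) is reduced to $B\ne 0$ by passing to a projectively equivalent Einstein metric with nonzero scalar curvature (Lemma~\ref{lem:changeofmetric}) and invoking the projective invariance of $\dim(\mathfrak{p}/\mathfrak{i})$.
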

Comparing the list  from  Theorem \ref{thm:proj} with that in Theorem \ref{thm:main}, 
we see that the possible values for $\mathrm{dim}\left(\mathfrak{p}(g)/\mathfrak{i}(g)\right)$
are given by the values for the degree of mobility $D(g)$ subtracted by $1$.  Indeed, in the generic case,
 the number of essential projective vector fields of an Einstein metric is  $D(g)-1$. 
   Moreover, if in addition to our assumptions the metric is Riemannian or 
   the scalar curvature is not zero, then  there exists 
 a natural linear mapping with $1$-dimensional kernel  from the set of solutions of \eqref{eq:main} to  the space 
$ \mathfrak{p}(g)/\mathfrak{i}(g)$,   
 see  Section \ref{sec21} below. There exist though Einstein 
 metrics of Lorentzian signature such that  $\mathrm{dim}\left(\mathfrak{p}(g)/\mathfrak{i}(g)\right)< D(g)-1$.

By Theorem \ref{thm:proj}, any Einstein metric of Riemannian or Lorentzian signature 
admitting a nonaffinely equivalent projectively equivalent metric also admits an essential projective vector field. 
The next theorem shows that the assumption on signature is not essential. 

\begin{thm}\label{thm:proj2}
Let $g$ be an Einstein metric of arbitrary signature on a simply connected 
 manifold   of dimension $n\geq 3$. 
If there exists a metric that is projectively equivalent  but  not affinely equivalent to $g$,  
  there exists at least one essential projective vector field for $g$. 
\end{thm}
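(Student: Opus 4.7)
The plan is to split the argument according to the scalar curvature of $g$ and to use Theorem~\ref{thm:proj} to dispose of the signatures already treated there. Theorem~\ref{thm:proj} yields the statement for Riemannian and Lorentzian $g$, so it suffices to handle signature $(p,q)$ with $p,q\geq 2$.

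\textbf{Case 1: nonzero scalar curvature.} As recalled in Section~\ref{sec21}, for an Einstein metric of arbitrary signature with $\mathrm{Scal}(g)\neq 0$ there is a natural linear map $\Phi$ from the space of solutions of \eqref{eq:main} to $\mathfrak{p}(g)/\mathfrak{i}(g)$ whose kernel is one-dimensional (spanned by the canonical solution $g$, which produces only Killing vector fields). The hypothesis that a projectively but not affinely equivalent metric exists gives $D(g)\geq 2$, so $\dim \mathrm{Image}(\Phi)\geq 1$, and we obtain at least one essential projective vector field.

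\textbf{Case 2: Ricci-flat $g$.} Here $\Phi$ is no longer guaranteed to have one-dimensional kernel, so one must produce a projective vector field by a different route. The plan is to pick a nondegenerate solution $L$ of \eqref{eq:main} corresponding to a projectively but not affinely equivalent metric $\bar g$, and to use the structure theory of projectively equivalent Ricci-flat metrics: in the spirit of Lemma~\ref{lem:decomp} together with the machinery developed in Section~\ref{sec:Bzeromuzero}, one shows that, at least locally, $(M,g)$ contains a factor on which the one-parameter family of solutions $\alpha g + \beta L$ induces a non-isometric homothetic scaling. The infinitesimal generator of this scaling preserves the projective class of $g$ but not $g$ itself, hence defines an essential projective vector field. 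Simply connectedness is then used to globalize the locally defined vector field.

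\textbf{Main obstacle.} The delicate step is precisely the Ricci-flat case in signature $(p,q)$ with $p,q\geq 2$, where the ``$\mathrm{Scal}\neq 0$'' BGG-type construction of projective vector fields from solutions of \eqref{eq:main} breaks down and where, unlike in the Riemannian or Lorentzian setting, the full classification needed for Theorem~\ref{thm:proj} is not available. The heart of the argument will be showing that the presence of a non-proportional solution $L$ always forces either an affine splitting that supports a homothetic vector field, or a one-parameter family of non-isometric projective transformations, ruling out the scenario in which every solution of \eqref{eq:main} yields only Killing fields.
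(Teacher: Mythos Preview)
Your Case~1 is fine and is exactly what the paper does: for $B\neq 0$ the vector field $\Lambda^\sharp$ attached to a nonparallel $L\in\A(g)$ is itself an essential projective vector field, as the computation \eqref{eq:splitting} shows; this works in arbitrary signature.

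Your Case~2, however, is not a proof but a wish list, and the wish list points in the wrong direction. Two concrete problems:

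\emph{First}, the machinery you invoke from Section~\ref{sec:Bzeromuzero} (the local form \eqref{eq:doublywarped}, Lemma~\ref{lem:localclass}, the ``affine splitting'' picture) is established there only for Lorentzian signature; in signature $(p,q)$ with $p,q\ge 2$ none of that is available, so your plan of extracting a factor carrying a homothetic scaling has no foundation.

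\emph{Second}, and more importantly, you have missed that the Ricci-flat case admits a much simpler, completely explicit argument which works in \emph{every} signature and requires none of the structure theory. One further subdivides according to whether some solution of \eqref{eq:extsys} (with $B=0$) has $\mu\neq 0$. If so, one changes to a projectively equivalent Einstein metric $\bar g$ with $\bar B\neq 0$ (the proof of Lemma~\ref{lem:changeofmetric} under the hypothesis $\mu\neq 0$ is signature-independent) and applies Case~1. If instead $\mu=0$ for \emph{all} solutions, then the second equation of \eqref{eq:extsys} gives $\nabla\Lambda=0$, so $\Lambda$ is parallel. Setting $v=\lambda\Lambda^\sharp$ with $\d\lambda=\Lambda$, one computes directly
\[
\mathcal{L}_v g-\tfrac{1}{n+1}\mathrm{trace}(\mathcal{L}_v g)^\sharp\,g
=2\Lambda\otimes\Lambda-\tfrac{2g(\Lambda,\Lambda)}{n+1}\,g\in\A(g),
\]
so $v$ is projective; since $\mathcal{L}_v g=2\Lambda\otimes\Lambda\neq 0$, it is essential (in fact affine but not Killing). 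No decomposition, no homotheties on factors, no Lorentzian classification is needed.

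In short: the missing idea is the $\mu\neq 0$ / $\mu=0$ dichotomy in the Ricci-flat case, and in the $\mu=0$ branch the one-line observation that $\Lambda$ is parallel and $\lambda\Lambda^\sharp$ does the job.
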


Examples show that the assumption that the metric is Einstein is essential for Theorem \ref{thm:proj2}. 

As we already recalled  above,  an  Einstein metric of arbitrary signature and of nonconstant sectional curvature 
on a closed manifold does not admit projectively  but not affinely equivalent metrics. Therefore, on a closed Einstein manifold 
of nonconstant sectional curvature every projective transformation is an affine transformation and, hence, every projective 
vector field is an affine vector field. 
Actually, in the Riemannian case we do not need the assumption that the metric is Einstein in the latter statement, 
see \cite[Corollary 1]{MatLichOb}.

Similar results were also obtained in the case the manifold is not necessarily closed but under the additional 
assumption that  the metric $g$ and a projectively equivalent but not affinely equivalent metric $\bar g$ are  complete.  
By \cite[Theorem 1]{KioMatEinstein},   projective but not affine  equivalence of two complete metrics 
(of arbitrary signature)  one of which is Einstein implies that both   metrics have  constant sectional curvature.  
This implies that complete   Einstein metrics do not admit complete projective but not affine vector fields.  
Again in the Riemannian case we do not need the assumption that the metric is Einstein in the latter statement, 
see \cite[Theorem 1]{MatLichOb}.

Note that  the result of Theorem \ref{thm:proj} has a predecessor: in \cite[Theorem 3]{FedMat} the possible
dimensions of the space of essential projective vector fields have been determined for a general
Riemannian or Lorentzian metric. As before the list of values we have obtained in the Einstein case
is shorter than the list of values obtained in \cite[Theorem 3]{FedMat}.

\subsection{Organisation of the article}

In Section \ref{sec:basic}, we recall basic facts from the theory of projectively equivalent metrics.

The remaining sections deal with the proofs of the Theorems \ref{thm:main}, \ref{thm:proj} and \ref{thm:proj2}. 
As mentioned
above,  the case of general  (= not necessarily Einstein) metrics was solved in \cite{FedMat}. 
We extensively use and therefore quote necessary results  from \cite{FedMat} in the paper  and  indicate the  
places when the additional condition that the metric is Einstein becomes important.  

The proof of Theorem \ref{thm:main} will be given in Section \ref{sec:proofmain}. It is divided into several
parts and a rough discription of how we proceed can be found in Section \ref{sec:scheme}.

The proof of Theorem \ref{thm:proj} and that of Theorem \ref{thm:proj2} will be given in Section \ref{sec:proofproj}. 

\section{Basic formulas}
\label{sec:basic}

Let $g,\bar g$ be two pseudo-Riemannian metrics on an $n$-dimensional manifold $M$.
We define a symmetric nondegenerate $(0,2)$-tensor $L$ by 
\begin{align}
L=L(g,\bar g)=\Big|\frac{\mathrm{det}\,\bar g}{\mathrm{det}\, g}\Big|^{\frac{1}{n+1}}g\bar g^{-1}g.\label{eq:defL}
\end{align}
In the formula above, we view $g,\bar g:TM\rightarrow T^* M$ naturally as bundle isomorphisms and identify
$(0,2)$-tensors with endomorphism $TM\rightarrow T^* M$ via $L(X)(Y)=L(X,Y)$ for $X,Y\in TM$. In tensor notation, \eqref{eq:defL}
reads
$$
L_{ij}=\Big|\frac{\mathrm{det}\,\bar g}{\mathrm{det}\, g}\Big|^{\frac{1}{n+1}}g_{ik}\bar g^{kl}g_{lj},
$$
where $\bar g^{ik}\bar g_{kj}=\delta^i_j$. It is a fundamental fact, see \cite{Sinj}, that $g$ and 
$\bar g$ are projectively equivalent, if and only if the tensor 
$L$ from \eqref{eq:defL} is a solution to the following PDE 
\begin{align}
\nabla_X L=X^\flat\odot \Lambda,\,\,\,X\in TM,\label{eq:main}
\end{align}
where $\Lambda$ is a certain $1$-form, $\nabla$ denotes the Levi-Civita connection of $g$, 
$\alpha\odot\beta=\alpha\otimes\beta+\beta\otimes\alpha$ for $1$-forms $\alpha,\beta$ and $X^\flat=g(X,.)$ 
denotes the metric dual w.r.t. $g$. 

Throughout the article, when it is clear which metric is used, we will denote by $X^\flat\in T^*M$ the metric dual 
of a vector $X\in TM$ and by $\alpha^\sharp\in TM$ the metric dual of a $1$-form $\alpha\in T^*M$. Similarly, for 
a $(0,2)$-tensor $L$ we let $L^\sharp$ denote the corresponding $(1,1)$-tensor defined by $g(L^\sharp.,.)=L$.  

Taking a trace in \eqref{eq:main} using $g$ shows that 
$$
\Lambda=\d\lambda\mbox{, where }\lambda=\frac{1}{2}\mathrm{trace}(L^\sharp).
$$
Thus, \eqref{eq:main} is in fact a linear PDE of first order on symmetric $(0,2)$-tensors $L$. As stated above, 
the nondegenerate symmetric solutions of \eqref{eq:main} correspond via \eqref{eq:defL} to metrics projectively 
equivalent to $g$. In fact, if $L$ is such a solution then $\bar g=(\mathrm{det}\,L^\sharp)^{-1}g((L^\sharp)^{-1}.,.)$ is 
projectively equivalent to $g$. Since $g$ is always a solution of \eqref{eq:defL} 
(corresponding to the fact that $g$ is projectively equivalent to itself), we can (locally) make any symmetric 
solution of \eqref{eq:main} nondegenerate by adding a suitable multiple of $g$. In this sense the linear space 
of symmetric solutions of \eqref{eq:main} corresponds to the space of metrics being projectively equivalent to $g$.
\begin{defn}
Let $(M,g)$ be a pseudo-Riemannian manifold. We denote by  $\A(g)$  the linear space of 
symmetric solutions of \eqref{eq:main}. The \emph{degree of mobility $D(g)$} of $g$ is the dimension of $\A(g)$.
\end{defn}
In view of the above correspondence we will often consider a pair $g,L$, where $L\in \A(g)$,
instead of a pair $g,\bar g$ of projectively equivalent metrics. 

As stated in the introduction, affinely equivalent metrics (i.e. metrics having the same Levi-Civita connections)
are projectively equivalent. Obviously, two metrics $g,\bar g$ are affinely equivalent if and only if 
the tensor $L=L(g,\bar g)$ from \eqref{eq:defL} is parallel (w.r.t. the Levi-Civita connection of one of the metrics). 
In view of  \eqref{eq:main}, this is equivalent to 
the property that 
$\Lambda$ from  \eqref{eq:main} is identically zero. Combining these, we obtain the following wellknown statement:  
\begin{lem}\label{lem:affine}
Let $g,\bar g$ be projectively equivalent pseudo-Riemannian metrics on a manifold $M$ and let $L=L(g,\bar g)\in \A(g)$
be given by \eqref{eq:defL}. Then, $g,\bar g$ are affinely equivalent if and only if $L$ is $g$-parallel 
if and only if the $1$-form $\Lambda$ corresponding to $L$ is identically zero.
\end{lem}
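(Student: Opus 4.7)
The plan is to prove the three-way equivalence as two independent bi-implications, one using the PDE \eqref{eq:main} and the other using the defining formula \eqref{eq:defL}.

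First, I would handle $L$ is $g$-parallel $\iff$ $\Lambda\equiv 0$. One direction is immediate from \eqref{eq:main}: if $\Lambda\equiv 0$, the right-hand side vanishes, so $\nabla L \equiv 0$. For the converse, I would invoke the trace identity already recorded in the text just before the lemma, namely $\Lambda = \d\lambda$ with $\lambda = \tfrac{1}{2}\mathrm{trace}(L^\sharp)$; assuming $L$ is $g$-parallel, $L^\sharp$ is $g$-parallel, so $\lambda$ is locally constant and $\Lambda \equiv 0$.

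Next, for the equivalence $L$ is $g$-parallel $\iff$ $g$ and $\bar g$ are affinely equivalent, I would start from the observation that affine equivalence $\nabla^g = \nabla^{\bar g}$ is equivalent to $\nabla^g\bar g \equiv 0$, by torsion-freeness of $\nabla^g$ together with uniqueness of the Levi-Civita connection applied to $\bar g$. Assuming $\nabla^g\bar g = 0$, a short bookkeeping step using $\Gamma^a_{ka} = \tfrac{1}{2}\partial_k\log|\mathrm{det}\,g|$ shows that $\partial_k \log|\mathrm{det}\,\bar g| = \partial_k\log|\mathrm{det}\,g|$, so the conformal factor $C = |\mathrm{det}\,\bar g/\mathrm{det}\,g|^{1/(n+1)}$ in \eqref{eq:defL} is locally constant. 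Since $g$ and $\bar g^{-1}$ are also $\nabla^g$-parallel, formula \eqref{eq:defL} then realizes $L$ as a product of $\nabla^g$-parallel tensors, so $L$ is parallel. For the reverse direction, if $L$ is $\nabla^g$-parallel then so is $L^\sharp$, its eigenvalues and hence $\mathrm{det}\,L^\sharp$ are locally constant, and \eqref{eq:defL} gives $\mathrm{det}\,L^\sharp$ as an explicit nonzero power of $\mathrm{det}\,\bar g/\mathrm{det}\,g$, forcing $C$ to be locally constant once more. Solving \eqref{eq:defL} for $\bar g^{-1} = C^{-1} L^\sharp g^{-1}$ then expresses it as a product of $\nabla^g$-parallel tensors, whence $\nabla^g\bar g = 0$.

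The whole argument is essentially bookkeeping from \eqref{eq:main} and \eqref{eq:defL}; I expect the only mildly delicate point to be the cyclic step showing that $C$ is locally constant in both directions, since $C$ itself appears inside the definition of $L$ and one must be sure no non-trivial variation is hidden there.
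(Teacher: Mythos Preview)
Your proposal is correct and follows the same route the paper indicates: the paper does not give a detailed proof but simply asserts, in the paragraph preceding the lemma, that affine equivalence is ``obviously'' equivalent to $L$ being parallel via \eqref{eq:defL}, and that this in turn is equivalent to $\Lambda\equiv 0$ ``in view of \eqref{eq:main}''. You have merely filled in the bookkeeping that the paper leaves implicit, and the only point worth noting is that for the implication $\nabla L=0\Rightarrow\Lambda=0$ one could alternatively argue directly from $X^\flat\odot\Lambda=0$ for all $X$ rather than via the trace, but your argument is equally valid.
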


Of fundamental importance for our goals is the following
\begin{thm}\cite{KioMatEinstein}\label{thm:extsys}
Let $(M,g)$ be a connected pseudo-Riemannian Einstein manifold of dimension $n\geq 3$ such that 
at least one $L\in \A(g)$ is nonparallel. Let 
$$
B=-\frac{\mathrm{Scal}}{n(n-1)},
$$
where $\mathrm{Scal}$ denotes the scalar curvature of $g$. 

Then, for every $L\in \A(g)$ with corresponding $1$-form $\Lambda$, 
there exists a function $\mu$ such that $(L,\Lambda,\mu)$ satisfies
\begin{align}
\nabla_X L=X^\flat\odot \Lambda,\,\,\,\nabla\Lambda=\mu g +BL,\,\,\,\nabla\mu=2B\Lambda.
\label{eq:extsys}
\end{align}
\end{thm}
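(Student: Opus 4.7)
The plan is a two-step prolongation of the PDE $\nabla_X L = X^\flat \odot \Lambda$, using the Ricci identity and the Einstein condition to force the system to close on the triple $(L,\Lambda,\mu)$.

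First, I would differentiate the main equation and antisymmetrize. Applying $\nabla_l$ to $\nabla_i L_{jk} = g_{ij}\Lambda_k + g_{ik}\Lambda_j$ and taking the skew part in $(l,i)$, the Ricci identity on the $(0,2)$-tensor $L$ yields the integrability condition
\[
g_{ij}\nabla_l\Lambda_k - g_{lj}\nabla_i\Lambda_k + g_{ik}\nabla_l\Lambda_j - g_{lk}\nabla_i\Lambda_j = -R^m_{jli}L_{mk} - R^m_{kli}L_{jm},
\]
valid for any pseudo-Riemannian $g$.

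The Einstein hypothesis enters through the decomposition $R_{abcd} = W_{abcd} - B(g_{ac}g_{bd}-g_{ad}g_{bc})$ with $W$ the Weyl tensor. Substituting, the scalar part combines with the left-hand side so that the identity takes the shape of a Kulkarni--Nomizu expression in $T := \nabla\Lambda - BL$ equalling a pure Weyl--$L$ contraction. Taking the $g^{ij}$-trace and using the trace-freeness of $W$ reduces this to $nT_{lk} - g_{lk}\mathrm{tr}(T) = -\tilde W(L)_{lk}$, where $\tilde W(L)$ denotes the surviving Weyl--$L$ contraction. The heart of the theorem, which I expect to be the main obstacle, is the algebraic claim that $\tilde W(L)$, and more generally the entire Weyl--$L$ contribution on the right-hand side of the integrability condition, vanishes for every solution $L\in\A(g)$ when $g$ is Einstein. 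I would deduce this by exploiting the remaining traces of the integrability condition together with the algebraic symmetries of the Weyl tensor (antisymmetry of the first and second index pairs, pair-symmetry, the first Bianchi identity, and total trace-freeness). Granted this vanishing, the trace equation gives $T = \mu g$ with $\mu = \frac{1}{n}(\Delta\lambda - 2B\lambda)$, i.e.\ the second equation $\nabla\Lambda = \mu g + BL$.

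Finally, I would iterate the procedure: differentiate $\nabla_j\Lambda_k = \mu g_{jk} + B L_{jk}$ by $\nabla_i$, use the main equation to eliminate $\nabla_i L_{jk}$, antisymmetrize in $(i,j)$, and apply the Ricci identity $[\nabla_i,\nabla_j]\Lambda_k = -R^m_{kij}\Lambda_m$. Taking the $g^{jk}$-trace, the Riemann contraction produces a single Ricci term; inserting the Einstein condition $\mathrm{Ric} = (\mathrm{Scal}/n)g$ and simplifying yields $\nabla_i\mu = 2B\Lambda_i$, the third equation. This closes the extended system \eqref{eq:extsys}.
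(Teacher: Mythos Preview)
The present paper does not prove Theorem~\ref{thm:extsys}; it is quoted from \cite{KioMatEinstein}, and the only justification given here is the remark that it ``follows from \cite[Corollary~1 and~2]{KioMatEinstein}''. So there is no argument in this paper to compare yours against line by line, but your outline contains a genuine gap that is worth naming.

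Your prolongation scheme has the right shape, and the integrability identity you write down is correct. The gap is precisely at the step you yourself flag as ``the heart of the theorem'': the claim that the Weyl--$L$ contribution vanishes for every $L\in\A(g)$ on an Einstein manifold \emph{cannot} be extracted from traces and the algebraic symmetries of $W$ alone. Notice that your argument never uses the hypothesis that at least one element of $\A(g)$ is nonparallel. That hypothesis is not decorative: on the product Einstein manifold $S^2(1)\times S^2(1)$ the tensor $L=g_1$ (the metric of the first factor) lies in $\A(g)$ with $\Lambda=0$, and the second equation in \eqref{eq:extsys} would force $BL$ to be a multiple of $g$, which it is not. Your purely algebraic Weyl argument, if it worked, would apply verbatim to this $L$ and produce a false conclusion; hence it cannot work as stated. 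Concretely, after the $g^{ij}$-trace you are left with $nT_{lk}-g_{lk}\,\mathrm{tr}(T)=-W_{aklb}L^{ab}$, and the other traces ($g^{jk}$, $g^{il}$) give $0=0$ by the antisymmetry of $W$ against the symmetry of $L$; nothing in the remaining symmetries forces the right-hand side to vanish.

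The proof in \cite{KioMatEinstein} uses the existence of a nonparallel solution in an essential way: one differentiates the integrability relation once more, feeds in the second Bianchi identity together with the Einstein condition, and exploits $\Lambda\neq 0$ on an open set to kill the curvature remainder. Only after this does the second equation follow, first for the given nonparallel $L$ and subsequently for every $L\in\A(g)$. Your derivation of the third equation from the second is fine once the second is in hand.
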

\begin{rem}
Theorem \ref{thm:extsys} follows from \cite[Corollary 1 and 2]{KioMatEinstein}. 
As shown in \cite{KioMat}, under the assumption $D(g)\geq 3$, the statement is actually 
true for any metric (not necessarily Einstein) and a certain constant $B$ (which is 
not necessarily equal to $-\mathrm{Scal}/n(n-1)$ in this case). 
\end{rem}

\section{Proof of Theorem \ref{thm:main}}
\label{sec:proofmain}

\subsection{Scheme of the proof}
\label{sec:scheme}

By Theorem \ref{thm:extsys}, under the assumptions of Theorem \ref{thm:main}, the degree of mobility $D(g)$ equals 
the dimension of the space of solutions of the system \eqref{eq:extsys}. 
The proof of Theorem \ref{thm:main}  is different for $B=-\mathrm{Scal}/n(n-1)=0$ and for $B\ne 0$.  

Consider first the case  $B\neq 0$. By scaling the metric $g$ we may assume that $B=-1$.   
The key observation  is that for $B=-1$ the solutions 
of  the system \eqref{eq:extsys}  correspond to parallel 
symmetric $(0,2)$-tensors on the metric cone $(\hat M:= \mathbb{R}_{>0}\times M,\hat g:= \d r^2+ r^2 g )$ 
over $(M,g)$. Depending on the sign of the initial $B$ and on the signature of the metric $g$, the metric 
cone $(\hat M,\hat g)$ has signature $(0,n+1)$, $(1,n)$, $(n,1)$,  
or $(n-1,2)$.   
 The space of parallel  tensors for cone metrics of these signatures  
has been described in \cite{FedMat}. The assumption that the initial metric is  Einstein is equivalent to 
the condition that the cone metric is Ricci-flat. Combining  the description of parallel tensors with the
Ricci-flat condition, we obtain   
the list of possible values for $D(g)$.

Consider now the case when 
 $B =0$ but assume that at least one solution of \eqref{eq:extsys}
has $\mu\neq 0$. This case is treated in Section \ref{sec:Bzero}. We show  
the local existence of  an Einstein metric
$\bar g$ of the same signature as $g$ and projectively equivalent to $g$ such that the corresponding constant  
$\bar B$ for $\bar g$ is nonzero.  This allows to reduce the problem to the already solved one. 

The remaining case, considered in Section \ref{sec:Bzeromuzero}, is when $B =0$ and $\mu = 0$ for all solutions 
of \eqref{eq:extsys}. In this case additional work is necessary, but also here the problem reduces  
to determining the dimension of the space of parallel symmetric $(0,2)$-tensors (although, this time, 
we consider such tensors for $g$ and not
for the cone metric $\hat g$). We can locally describe all such metrics
and the Einstein condition poses additional restrictions on the possible values
of the degree of mobility.

Finally, in Section \ref{sec:realization} we complete the proof of Theorem \ref{thm:main} by showing that actually
each number $D$ from the list in the theorem can be realized as the degree of mobility of a certain 
Lorentzian resp. Riemannian  Einstein 
metric. This is done by going in the opposite direction of the procedure explained in Section \ref{sec:Bnonzero}: 
we construct a Ricci flat cone such that the space of parallel symmetric $(0,2)$-tensor fields has dimension equal to $D$.

\subsection{The case of nonzero scalar curvature}
\label{sec:Bnonzero}

The goal of this section is to prove
\begin{prop}\label{prop:Bnonzero}
Let $(M,g)$ be a simply connected Riemannian or Lorentzian Einstein manifold of dimension $n\geq 3$
with nonzero scalar curvature such that $\Lambda\neq 0$ for at least one solution of the system \eqref{eq:extsys}. 

Then, the degree of mobility $D(g)$ is given by one of the values in the list of Theorem \ref{thm:main}.
\end{prop}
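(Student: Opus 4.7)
The strategy is dictated by Section \ref{sec:scheme}: reduce the problem to counting parallel symmetric $(0,2)$-tensors on the metric cone, then use the classification from \cite{FedMat} together with the Ricci-flat condition.

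First I would rescale $g$ so that $B=-\mathrm{Scal}/n(n-1)=\pm 1$ (both signs must be kept, since they will produce different cone signatures). Then I would form the metric cone $(\hat M,\hat g)=(\mathbb{R}_{>0}\times M,\, \varepsilon\,\d r^2+r^2 g)$ with $\varepsilon=\mathrm{sign}(-B)$, and verify directly from \eqref{eq:extsys} that the assignment
\[
(L,\Lambda,\mu)\longmapsto \hat L:=r^2 L+r\,\d r\odot \Lambda^{\sharp,\flat_{\hat g}}+\mu\,\d r\otimes \d r \quad(\text{suitably interpreted})
\]
defines a linear bijection between $\A(g)$ and the space $P(\hat g)$ of $\hat\nabla$-parallel symmetric $(0,2)$-tensors on $\hat M$. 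This is the standard BGG-to-tractor tractor/cone dictionary; the three equations of \eqref{eq:extsys} with $B=\mp 1$ are precisely the three blocks of $\hat\nabla \hat L=0$. The crucial input of the Einstein hypothesis on $g$ enters here: it is equivalent, given the normalization of $B$, to $(\hat M,\hat g)$ being Ricci-flat.

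Next I would determine the possible signatures of $\hat g$: they are $(0,n+1)$, $(1,n)$, $(n,1)$, or $(n-1,2)$, according to the sign of $B$ and whether $g$ is Riemannian or Lorentzian. In each case the manifold is simply connected (since $M$ is), so $(\hat M,\hat g)$ admits a global de Rham--Wu decomposition into indecomposable factors. The space $P(\hat g)$ splits as a direct sum of spaces of parallel tensors coming from the factors plus the ``mixing'' contributions from parallel vector fields between flat factors, exactly as analyzed in \cite{FedMat}. I would quote the relevant proposition of \cite{FedMat} that lists, for each of the four cone signatures, the possible dimensions of $P(\hat g)$ in terms of the decomposition type (a flat piece of dimension $k+1$ providing a $(k+1)(k+2)/2$-dimensional space of parallel symmetric tensors, plus $l$ further indecomposable non-flat factors each contributing at most one parallel symmetric tensor up to scale).

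The main obstacle is then to impose Ricci-flatness on $\hat g$ and see that it forces precisely the constraints $1\leq l\leq [(n+1-k)/5]$ (and the extra Lorentzian line in Theorem \ref{thm:main}). For the Riemannian cone case this is immediate: Ricci-flatness rules out any non-flat indecomposable factor of dimension $<5$ (there is no irreducible Riemannian holonomy in low dimension admitting a Ricci-flat, non-flat metric), so each of the $l$ non-flat factors occupies at least $5$ dimensions, giving $5l\leq (n+1)-k$. For the indefinite cone signatures one must additionally use Wu's classification to rule out Ricci-flat indecomposable Lorentzian factors of dimension $<5$ and account for the special indecomposable-but-non-irreducible Lorentzian factors of Walker type, which is exactly the source of the extra Lorentzian series $k=n-3\bmod 5$. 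The maximal value $(n+1)(n+2)/2$ corresponds to the case where $\hat g$ is flat, equivalently where $g$ has constant sectional curvature. Assembling these cases and translating $k+1\mapsto k,\ l\mapsto l$ back to the notation of Theorem \ref{thm:main} yields exactly the asserted list, proving Proposition \ref{prop:Bnonzero}.
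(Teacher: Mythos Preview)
Your overall strategy matches the paper's: rescale to $B=-1$, pass to the cone $(\hat M,\hat g)$, identify $\A(g)$ with parallel symmetric $(0,2)$-tensors on $\hat g$, observe that the Einstein condition makes $\hat g$ Ricci-flat, decompose $\hat g$ via \eqref{eq:decomp}, and extract the formula $D(g)=k(k+1)/2+l$ together with bounds on $k,l$. That is exactly the route taken in Section \ref{sec:Bnonzero}.

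However, your justification of the key bound $\dim V_i\geq 5$ is not correct as written. You invoke ``no irreducible Riemannian holonomy in low dimension admitting a Ricci-flat, non-flat metric,'' but this is false in dimension $4$: any irreducible hyperk\"ahler (holonomy $SU(2)$) $4$-manifold is Ricci-flat and non-flat. A generic Ricci-flat non-flat factor of dimension $4$ would only give $l\leq [(n+1-k)/4]$, which is too weak. What the paper actually uses is Lemma \ref{lem:product}: each factor $(M_i,g_i)$ in the holonomy decomposition of a cone is \emph{itself} a local cone. Then Lemma \ref{lem:dimRicciflat}(1) applies: a Ricci-flat non-flat local cone has dimension $\geq 5$, because a $4$-dimensional cone is the cone over a $3$-dimensional Einstein manifold, hence over a space of constant curvature, hence flat. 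You never invoke the cone structure of the factors, and without it the bound fails.

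The same omission affects your treatment of the Lorentzian extra series. The additional values do not arise from ``Walker type'' factors in the vague sense you describe; they arise precisely when $\hat g$ has signature $(n-1,2)$ and one indecomposable factor carries a $2$-dimensional totally isotropic space of parallel vector fields (so $k_1=2$ in the paper's notation). For such a factor Lemma \ref{lem:dimRicciflat}(2) (again using that the factor is a local cone) gives $\dim V_1\geq 6$, and Example \ref{ex:counterex} shows this is sharp. This is what produces $l\leq[(n+2-k)/5]$ with $2\leq k\leq n-3$. Finally, your bookkeeping ``flat piece of dimension $k+1$'' and the closing ``translating $k+1\mapsto k$'' are off: in the paper $k$ is the total number of linearly independent parallel vector fields on $\hat g$, namely $k=\dim V_0+k_1+\cdots+k_l$, so no shift is needed.
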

We will go along the same line of ideas as in \cite[Section 4]{FedMat}. We will start working with a general
Riemannian or Lorentzian metric $g$ and implement the condition that $g$ is Einstein at the corresponding places.
Since the constant $B:=-\mathrm{Scal}/n(n-1)$ in \eqref{eq:extsys} is nonzero, we can consider the metric $-Bg$ 
instead of $g$ and for simplicity, 
we denote this new metric by the same symbol $g$. Because we have rescaled the metric, the system \eqref{eq:extsys}
is now satisfied for a new constant $B=-1$, that is, for every $L\in \A(g)$ with corresponding $1$-form $\Lambda$,
we find a function $\mu$ such that $(A,\Lambda,\mu)$ satisfies
\begin{align}
\nabla_X L=X^\flat\odot \Lambda,\,\,\,\nabla\Lambda=\mu g -L,\,\,\,\nabla\mu=-2\Lambda.
\label{eq:extsys-1}
\end{align}
Note that since the new metric $g$ and the original metric are proportional to each other, they have the same degree of mobility.

Note also that since the initial metric was assumed to be Riemannian or Lorentzian the signature of the new metric $g$ 
is now $(0,n)$, $(1,n-1)$, $(n,0)$ or $(n-1,1)$, depending on the sign of the scaling constant $B$. 

For further use let us recall the following  statement which can be found for example in 
\cite[Proposition 3.1]{MatMoun} or \cite[Theorem 8]{FedMat} and can be verified by a 
direct calculation.
\begin{lem}\label{lem:isom}
There is an isomorphism between the space of solutions of \eqref{eq:extsys-1} on a 
pseudo-Riemannian manifold $(M,g)$ and the space of parallel symmetric $(0,2)$-tensors on the 
metric cone $(\hat M=\R_{>0}\times M,\,\,\,\hat g=\d r^2+r^2 g)$ over $(M,g)$. 
\end{lem}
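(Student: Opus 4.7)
I would construct the isomorphism explicitly, by packaging each solution $(L,\Lambda,\mu)$ into a single symmetric $(0,2)$-tensor on the cone. Concretely, define
\[
\hat L \;:=\; r^2 L \;-\; r\,\d r\odot\Lambda \;+\; \mu\,\d r\otimes\d r,
\]
where $L$, $\Lambda$ and $\mu$ are pulled back from $M$ along the projection $\hat M\to M$ (so they are $r$-independent). The claim is that $\hat L$ is $\hat\nabla$-parallel if and only if $(L,\Lambda,\mu)$ solves \eqref{eq:extsys-1}, and that this assignment is a linear bijection.

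\textbf{Parallel condition.} The core verification uses the standard Koszul formulas for the warped product $\hat g=\d r^2+r^2 g$: for horizontal lifts $X,Y,Z$ of vector fields on $M$,
\[
\hat\nabla_{\partial_r}\partial_r=0,\quad \hat\nabla_{\partial_r}X=\hat\nabla_X\partial_r=\tfrac{1}{r}X,\quad \hat\nabla_X Y=\nabla_X Y-r\,g(X,Y)\,\partial_r.
\]
Using these, I would evaluate $(\hat\nabla_W\hat L)(A,B)$ on each of the independent triples $(W,A,B)\in\{\partial_r,TM\}^3$. The components with $W=\partial_r$ vanish automatically because of the powers $r^2, r, 1$ in the definition of $\hat L$ (this is exactly why those powers are chosen). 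The three remaining independent components, with $W=Z$ horizontal, reduce respectively to
\[
r^2\bigl[\nabla_Z L-Z^\flat\odot\Lambda\bigr],\quad -r\bigl[\nabla_Z\Lambda-\mu\,g(Z,\cdot)+L(Z,\cdot)\bigr],\quad Z(\mu)+2\Lambda(Z),
\]
which vanish simultaneously for all $Z$ precisely when the three equations of \eqref{eq:extsys-1} hold.

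\textbf{Inverse and main obstacle.} For the converse, given a parallel symmetric $(0,2)$-tensor $\hat L$ on $\hat M$, decompose it along the splitting $T\hat M=\R\partial_r\oplus TM$ as $\hat L=f\,\d r^2+\d r\odot\alpha+B$, with $f$, $\alpha$, $B$ a priori depending on $(r,x)$; the radial parallel equation $\hat\nabla_{\partial_r}\hat L=0$ is a linear ODE in $r$ that forces $f=\mu$, $\alpha=-r\Lambda$, $B=r^2 L$ for some $r$-independent data $(L,\Lambda,\mu)$ on $M$, and the remaining horizontal parallel equations then become exactly \eqref{eq:extsys-1}. The computations are entirely mechanical; the only real obstacle is sign-tracking for the symmetric product $\odot$ and for the warping factor, so that the powers of $r$ match correctly. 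Deriving the formula by integrating the radial ODE, rather than guessing it, bypasses the bookkeeping and has the conceptual benefit of exhibiting the three equations of \eqref{eq:extsys-1} as successive components of a single parallel-transport equation on the cone.
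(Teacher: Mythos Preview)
Your proposal is correct and is precisely the ``direct calculation'' the paper alludes to: the paper does not spell out a proof but simply cites \cite[Proposition~3.1]{MatMoun} and \cite[Theorem~8]{FedMat} and remarks that the statement can be verified directly. Your explicit formula $\hat L=r^2L-r\,\d r\odot\Lambda+\mu\,\d r\otimes\d r$ together with the componentwise check against the cone Levi-Civita connection is exactly how those references proceed, so nothing further is needed.
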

Since the manifold $(M,g)$ in our case has signature $(0,n)$, $(1,n-1)$, $(n,0)$ or $(n-1,1)$, the signature 
of the metric $\hat g$ is $(0,n+1)$, $(1,n)$, $(n,1)$ or $(n-1,2)$. 

By Lemma \ref{lem:isom}, in order to  determine the possible values of the degree of mobility $D(g)$ of $g$,  
it is sufficient to  calculate the
possible dimensions of the space of parallel symmetric $(0,2)$-tensors for the cone metric $\hat g$. 

The description of such tensors has been obtained in \cite[Theorem 5]{FedMat}. Since we will come back to this result
later on, we summarize it in
\begin{lem}\label{lem:decomp}
Let $(M,g)$ be a simply connected $n$-dimensional pseudo-Riemannian manifold. 
Assume one of the following:
\begin{enumerate}
\item $g$ has signature $(0,n)$ or $(1,n-1)$.
\item $g$ is a metric cone of signature $(n-2,2)$.
\end{enumerate} 
Consider the maximal holonomy decomposition
\begin{align}
T M=V_0\oplus V_1\oplus...\oplus V_l\label{eq:decomp}
\end{align}
of the tangent bundle $TM$ into mutually orthogonal subbundles invariant w.r.t. the holonomy group 
$H( g)$ of $ g$. More precisely, $V_0$ is flat in the sense that $H( g)$ acts trivially on it 
and $V_1,...,V_l$ are indecomposable, i.e., do not admit an invariant nondegenerate subbundle. 
Let $g_i$ denote the restriction of $g$ to $V_i$ for $i=0,...,l$. If $\tau_1,...,\tau_k$ is a basis 
for the space of parallel $1$-forms for $ g$, then any parallel symmetric $(0,2)$-tensor 
can be written as
\begin{align}
\sum_{i,j=1}^k c_{ij}\tau_i\otimes \tau_j+\sum_{i=1}^l c_i g_i\label{eq:parallel}
\end{align}
for constants $c_{ij}=c_{ji}$ and $c_i$. 
\end{lem}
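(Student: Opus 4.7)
The plan is to invoke the holonomy principle together with a de Rham--Wu type splitting, reducing the statement to a purely algebraic classification at a single tangent space. Since $M$ is simply connected, a symmetric $(0,2)$-tensor on $M$ is $\nabla$-parallel if and only if, at a fixed $x\in M$, its value is an element of $S^2 T_x^*M$ invariant under the (restricted) holonomy representation $H(g)\subset O(T_xM,g_x)$; parallel transport then recovers the global tensor unambiguously thanks to simple connectedness. Thus the task is to classify the $H(g)$-invariant symmetric bilinear forms on $T_xM$ and to match them with the expressions appearing in \eqref{eq:parallel}.

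The second ingredient is the splitting $TM=V_0\oplus V_1\oplus\cdots\oplus V_l$ asserted in the statement. In case (1), this is precisely the classical de Rham decomposition theorem (for Riemannian signature) and its Lorentzian extension due to Wu; in case (2), one appeals to the analogous decomposition for cone metrics of signature $(n-2,2)$ worked out in \cite{FedMat}, where the homothetic cone structure restricts the possible indecomposable factors to a short, controllable list. Relative to the resulting orthogonal splitting, an invariant form $P\in S^2 T_x^*M$ decomposes into blocks $P_{\alpha\beta}:V_\alpha|_x\otimes V_\beta|_x\to\R$, each itself $H(g)$-invariant. On the flat block $V_0\otimes V_0$ invariance is vacuous and gives a free symmetric bilinear form, whose contribution to \eqref{eq:parallel} is exactly the part of $\sum c_{ij}\tau_i\otimes\tau_j$ supported on $V_0^*$. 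For a diagonal block $P_{ii}$ with $i\geq 1$, indecomposability of $V_i$ forces $P_{ii}=c_i g_i$ modulo possible contributions of the shape $\tau\otimes\tau$ with $\tau$ a parallel $1$-form carried by an invariant isotropic line sitting inside $V_i$; but such $\tau$'s already belong to the global basis $\tau_1,\ldots,\tau_k$. For an off-diagonal block with $\alpha\neq\beta$, fixing a vector in one factor turns $P_{\alpha\beta}$ into an $H(g)$-invariant linear functional on the other factor, which either vanishes or is a multiple of a parallel $1$-form; it is therefore again absorbed into the $\tau_i\otimes\tau_j$ sum. Collecting the cases yields the formula.

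The main obstacle lies in the diagonal-block analysis for a weakly irreducible but not irreducible indecomposable factor $V_i$, a phenomenon that occurs in Lorentzian signature and in the cone case (2) but not in Riemannian signature. There the naive Schur-type argument valid in the irreducible setting fails, because $V_i$ may carry proper $H(g)$-invariant subspaces (necessarily degenerate), yielding a priori extra invariant symmetric bilinear forms built from the corresponding parallel isotropic directions. The essential point, for which one appeals to the B\'erard-Bergery--Ikemakhen classification of indecomposable Lorentzian holonomies (respectively its cone analogue in \cite{FedMat}), is that every such extra invariant is of the form $\tau\otimes\tau$ with $\tau$ one of the parallel $1$-forms already enumerated by $\tau_1,\ldots,\tau_k$; consequently the ansatz \eqref{eq:parallel} exhausts all possibilities and the lemma follows.
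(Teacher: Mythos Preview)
The paper does not supply its own proof of this lemma: it is quoted as a known result, with references to Eisenhart \cite{ei} for Riemannian signature, Eisenhart and Solodovnikov \cite{eiparallel,Sol} for Lorentzian signature, and \cite[Theorem~5]{FedMat} for metric cones of signature $(n-2,2)$. Your sketch correctly identifies the architecture behind those proofs --- reduction to a pointwise algebraic question via the holonomy principle, a de Rham--Wu splitting, and a block-by-block analysis of $H(g)$-invariant symmetric bilinear forms --- so in spirit you are aligned with the literature the paper cites. One point to tighten: your off-diagonal argument (``fixing a vector in one factor turns $P_{\alpha\beta}$ into an $H(g)$-invariant linear functional on the other'') is not literally correct for an arbitrary fixed vector; what makes it work is that the holonomy group \emph{factors} as $H_0\times H_1\times\cdots\times H_l$ compatibly with \eqref{eq:decomp}, so one may vary the group element on $V_\beta$ while keeping it trivial on $V_\alpha$, forcing $P_{\alpha\beta}(v,\cdot)$ to land in the $H_\beta$-invariants (parallel $1$-forms) and dually to factor through the $H_\alpha$-coinvariants.

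The genuine gap is in your treatment of the diagonal block on a weakly irreducible (indecomposable but reducible) factor $V_i$. You invoke the B\'erard-Bergery--Ikemakhen classification, but that result classifies the possible holonomy \emph{algebras}; deducing from each type on that list that the $H_i$-invariant symmetric bilinear forms are exactly $\mathrm{span}\{g_i,\tau\otimes\tau\}$ is a separate computation which you do not perform, and which is in any case circuitous. The original Lorentzian arguments \cite{eiparallel,Sol} predate BBI and proceed directly: one views an invariant symmetric form as a $g_i$-selfadjoint $H_i$-equivariant endomorphism $A$ of $V_i$, uses that $A$ preserves the invariant null line $\R u$ and its orthogonal $u^\perp$, and then exploits indecomposability (no invariant nondegenerate complement) to force $A=c\,\mathrm{Id}$ modulo a rank-one piece along $u$. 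For case~(2) the analogous direct argument (with a totally isotropic invariant $2$-plane replacing the null line) is precisely the content of \cite[Theorem~5]{FedMat}. As written, your final paragraph asserts the key conclusion rather than proving it.
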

\begin{rem}
The statement of Lemma \ref{lem:decomp} is classical for positive definite $g$ \cite{ei} and 
for Lorentzian signature \cite{eiparallel,Sol}. The description \eqref{eq:parallel} of parallel symmetric 
$(0,2)$-tensors for metric cones of signature $(n-2,2)$ is given by \cite[Theorem 5]{FedMat}. 
If the metric is not a cone the description of  such tensors for metrics of arbitrary signature
is in general much more complicated, see \cite{Boubel}. 
\end{rem}
Formula \eqref{eq:parallel} shows that the dimension of parallel symmetric $(0,2)$-tensors for $\hat g$ and, 
hence, the degree of mobility $D(g)$ of $g$, is given by 
\begin{align}
D(g)=\frac{k(k+1)}{2}+l,\label{eq:degree}
\end{align}
where $k$ is the number of linearly independent parallel vector fields for $\hat g$ and $l$ the number of 
indecomposable components in the holonomy decomposition of $(\hat M,\hat g)$. To prove the first direction of 
Theorem \ref{thm:main} under the assumption $B\neq 0$, it therefore suffices to
determine the range of the integers $k,l$ in \eqref{eq:degree}. We start listing some known facts 
concering curvature properties of the metric cone.
\begin{lem}\label{lem:curvfacts}
Let $(\hat M,\hat g)$ be the metric cone over an $n$-dimensional pseudo-Riemannian manifold $(M,g)$.
Then, the following statements hold:
\begin{enumerate}
\item $\hat g$ is flat if and only if $g$ has constant sectional curvature equal to $1$.
\item $\hat g$ is Ricci flat if and only if $g$ is Einstein with scalar curvature $n(n-1)$.
\end{enumerate}
\end{lem}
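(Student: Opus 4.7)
The plan is to prove both assertions by a direct computation of the Levi-Civita connection and curvature tensor of the warped product metric $\hat g = \d r^2 + r^2 g$, and then simply read off the stated conditions.

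First I would write down the standard warped-product formulas for the Levi-Civita connection $\hat\nabla$ of $\hat g$ in terms of $\nabla$ and $\partial_r$. Denoting horizontal lifts of vector fields on $M$ by the same letters, these are
\begin{align*}
\hat\nabla_{\partial_r}\partial_r &= 0, \\
\hat\nabla_{\partial_r} X = \hat\nabla_X \partial_r &= \tfrac{1}{r} X, \\
\hat\nabla_X Y &= \nabla_X Y - r\, g(X,Y)\,\partial_r.
\end{align*}
These are immediate from the Koszul formula and the form of $\hat g$, and do not depend on signature.

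Next I would feed these into the definition of the Riemann tensor. A direct check shows that all components of $\hat R$ involving $\partial_r$ vanish, while for horizontal vector fields one obtains
$$
\hat R(X,Y)Z = R(X,Y)Z - \bigl(g(Y,Z)X - g(X,Z)Y\bigr).
$$
From this, part (1) is immediate: $\hat g$ is flat iff the right-hand side vanishes for all $X,Y,Z$, which by the characterization of spaces of constant sectional curvature is precisely the condition that $g$ has constant sectional curvature $1$.

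For part (2), I would take the $\hat g$-trace of $\hat R$. Because the $\partial_r$-components vanish, tracing over horizontal indices (using that $\hat g$ restricted to horizontal vectors is $r^2 g$, with inverse $r^{-2}g^{-1}$) yields
$$
\widehat{\mathrm{Ric}}(X,Y) = \mathrm{Ric}(X,Y) - (n-1)\,g(X,Y),
$$
while $\widehat{\mathrm{Ric}}(\partial_r,\cdot) = 0$. Hence $\hat g$ is Ricci flat iff $\mathrm{Ric} = (n-1)g$, i.e. iff $g$ is Einstein with scalar curvature $\mathrm{Scal} = n(n-1)$. Since the computations are routine warped-product calculations, there is no real obstacle here; the only care needed is to keep track of the factor $r^2$ when tracing with respect to $\hat g$ rather than $g$.
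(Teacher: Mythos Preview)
Your proposal is correct and follows exactly the approach indicated in the paper, which simply cites the standard warped-product curvature formulas (e.g.\ \cite[equation (3.2)]{ACGL}) and reads off the result. You have merely supplied the explicit computation that the paper leaves implicit.
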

\begin{proof}
The statements follow from the usual formulas relating the curvatures of $\hat g$ and $g$, 
see for instance \cite[equation (3.2)]{ACGL}. 
\end{proof}
Since in our case the given Einstein metric $g$ has $B=-1$, we have $\mathrm{Scal}(g)=n(n-1)$ and therefore 
$\hat g$ is Ricci flat. 

The so-called \emph{cone vector field} $\xi=r\partial_r$ on $\hat M$ satisfies 
\begin{align}
\hat \nabla \xi=\Id.\label{eq:conevf}
\end{align}
This is straight-forward to see (using the formulas for the Levi-Civita connection $\hat\nabla$ 
of $\hat g$, see for instance \cite[equation (3.1)]{ACGL}) and is wellknown, see \cite[Lemma 1]{FedMat}. 
A manifold $(\hat M,\hat g)$ admitting a vector field $\xi$ satisfying \eqref{eq:conevf}
will be called a \emph{local cone} in what follows. The name is justified in
\begin{lem}\label{lem:localcone}
Let $(\hat M,\hat g,\xi)$ be a local cone of dimension $n+1$. Then, $\xi$ is nonvanishing on a dense 
and open subset and in a neighbourhood of each point of this subset $(\hat M,\hat g,\xi)$ takes the form
$$
\hat M=\R_{>0}\times M,\,\,\,\hat g=\varepsilon \d r^2+r^2 g,\,\,\,\xi=r\partial_r
$$
where $(M,g)$ is a certain $n$-dimensional pseudo-Riemannian manifold and $\varepsilon=\mathrm{sgn}(\hat g(\xi,\xi))$. 
That is, locally in a neighbourhood of almost every point, $(\hat M,\hat g)$ is a metric cone,
up to multiplication by $-1$, over a certain pseudo-Riemannian manifold.
\end{lem}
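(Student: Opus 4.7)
The plan is to exploit the defining property $\hat\nabla\xi=\Id$ to show (i) that the zero set of $\xi$ is discrete, so $\{\xi\neq 0\}$ is open and dense, and (ii) that the flow of $\xi$ provides the desired trivialisation of a neighbourhood as a warped product $\varepsilon\,\d r^2+r^2 g$. For (i) I would work in local coordinates around a zero $p$ of $\xi$: the condition $\partial_j\xi^i+\Gamma^i_{jk}\xi^k=\delta^i_j$ evaluated at $p$ reduces to $\partial_j\xi^i(p)=\delta^i_j$ since $\xi(p)=0$, so the map $q\mapsto(\xi^i(q))$ is a local diffeomorphism at $p$ by the inverse function theorem and $p$ is its only zero nearby.

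Next I would introduce the ``radial'' function $f:=\tfrac{1}{2}\hat g(\xi,\xi)$ and compute
\begin{equation*}
\d f(X)=\hat g(\hat\nabla_X\xi,\xi)=\hat g(X,\xi)=\xi^\flat(X),
\end{equation*}
so $\d f=\xi^\flat$. Thus $\d f$ is nonzero wherever $\xi\neq 0$, and the set $U':=\{f\neq 0\}$ (inside $\{\xi\neq 0\}$) is open and dense: in Riemannian signature $U'=\{\xi\neq 0\}$, and in indefinite signature the set $\{f=0\}$ is a regular hypersurface where $\xi$ does not vanish. On $U'$ set $r:=\sqrt{|2f|}>0$ and $\varepsilon:=\mathrm{sgn}\,\hat g(\xi,\xi)$. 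Then $\hat g(\xi,\xi)=\varepsilon r^2$, $r\,\d r=\varepsilon\,\xi^\flat$, and hence $\xi(r)=\d r(\xi)=\varepsilon\hat g(\xi,\xi)/r=r$. Together with the identity $L_\xi\hat g=2\hat g$, which follows directly from $\hat\nabla\xi=\Id$ via $(L_\xi\hat g)(X,Y)=\hat g(\hat\nabla_X\xi,Y)+\hat g(X,\hat\nabla_Y\xi)=2\hat g(X,Y)$, the flow $\phi_t$ of $\xi$ satisfies $r\circ\phi_t=e^t r$ and $\phi_t^*\hat g=e^{2t}\hat g$.

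Fix a point $p_0\in U'$, set $r_0:=r(p_0)$, and let $M$ be a small neighbourhood of $p_0$ in the hypersurface $\{r=r_0\}$ (which is regular since $\d r\neq 0$ on $U'$). Define $\Phi:\R_{>0}\times M\to\hat M$ by $\Phi(s,q):=\phi_{\log(s/r_0)}(q)$. Since $\xi(r)=r\neq 0$ on $U'$ while $TM\subset\ker\d r$, the differential of $\Phi$ is invertible at $(r_0,p_0)$, so $\Phi$ is a local diffeomorphism onto a neighbourhood of $p_0$. Moreover $\Phi_*\partial_s=\xi/s$, and along each integral curve $r\circ\Phi(s,q)=s$, so $\xi=s\,\Phi_*\partial_s$ becomes $r\partial_r$ after renaming $s\leftrightarrow r$.

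Finally, to identify $\Phi^*\hat g$, I would compute the three components: $\hat g(\Phi_*\partial_s,\Phi_*\partial_s)=s^{-2}\hat g(\xi,\xi)=\varepsilon$; the mixed component vanishes because $\phi_t$ preserves $\ker\d r$ (as $L_\xi\,\d r=\d r$ gives $\phi_t^*\d r=e^t\d r$), so $\hat g(\xi,\Phi_*X)=\varepsilon r\,\d r(\Phi_*X)=0$ for $X\in TM$; and the tangential component is $\hat g(\Phi_*X,\Phi_*Y)=(s/r_0)^2\hat g(X,Y)=s^2 g(X,Y)$ where $g:=r_0^{-2}\hat g|_M$, using $\phi_t^*\hat g=e^{2t}\hat g$. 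This gives $\Phi^*\hat g=\varepsilon\,\d s^2+s^2 g$, as required. The only subtle point—indeed the main obstacle worth flagging—is the possibility that $\hat g(\xi,\xi)$ vanishes in indefinite signature even where $\xi$ does not; the argument above sidesteps this by restricting to the open dense subset $U'$, which is why the statement reads ``locally in a neighbourhood of almost every point''.
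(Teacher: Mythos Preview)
Your proof is correct and is precisely the standard argument the paper defers to: the paper does not spell out a proof but cites \cite{FedMat} and remarks that the key function is $v=\tfrac{1}{2}\hat g(\xi,\xi)$, which is exactly your $f$, and your use of the flow of $\xi$ together with $L_\xi\hat g=2\hat g$ to produce the warped-product form is the expected route. The only cosmetic point is that your sentence ``the set $\{f=0\}$ is a regular hypersurface where $\xi$ does not vanish'' should be read as ``the set $\{f=0\}\cap\{\xi\neq 0\}$ is a regular hypersurface'' (since $\d f=\xi^\flat$ vanishes at zeros of $\xi$), but combined with the discreteness of the zero set this still gives that $U'$ is open and dense, as you intend.
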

\begin{proof}
The statement and its proof are standard, see \cite[Lemma 1 and Remark 2]{FedMat} (the role of the positive 
function $v$ used in this reference is played by $\frac{1}{2}\hat g(\xi,\xi)$ for $\hat g(\xi,\xi)>0$).
\end{proof}

We will need a dimensional estimate for nonflat Ricci flat local cones.
\begin{lem}\label{lem:dimRicciflat}
Let $(\hat M,\hat g,\xi)$ be a Ricci flat local cone. 
\begin{enumerate}
\item If $\hat g$ is nonflat, then $\mathrm{dim}\,\hat M\geq 5$.
\item If $\hat g$ is nonflat and $u$ is a nonzero parallel null vector field for $g$, 
then $\mathrm{dim}\,\hat M\geq 6$.
\end{enumerate}
\end{lem}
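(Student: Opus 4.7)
For both parts the plan is to use Lemma~\ref{lem:localcone} to reduce locally to the genuine cone form $\hat g = \d r^2 + r^2 g$ over an $n$-dimensional pseudo-Riemannian base $(M,g)$ with $n = \mathrm{dim}\,\hat M - 1$ (an overall sign of $\hat g$ affects neither Ricci-flatness nor flatness and can be absorbed). By Lemma~\ref{lem:curvfacts}(2), Ricci-flatness of $\hat g$ is equivalent to $g$ being Einstein with $\mathrm{Scal}_g = n(n-1)$, while by Lemma~\ref{lem:curvfacts}(1) $\hat g$ is flat if and only if $g$ has ``constant sectional curvature $+1$'' in the sense that $R_{ijkl}=g_{ik}g_{jl}-g_{il}g_{jk}$.

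For part~(1) I would observe that every Einstein metric in dimension $n \leq 3$ has constant sectional curvature (trivially for $n \leq 2$, and via the identical vanishing of the Weyl tensor in dimension $3$, regardless of signature). The condition $\mathrm{Scal}_g = n(n-1)$ then pins this constant to $+1$, forcing $\hat g$ to be flat by Lemma~\ref{lem:curvfacts}(1) and contradicting the hypothesis. Hence $n \geq 4$, i.e., $\mathrm{dim}\,\hat M \geq 5$.

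For part~(2) the plan is to argue by contradiction: assume $\mathrm{dim}\,\hat M = 5$, so $(M,g)$ is $4$-dimensional Einstein with $\mathrm{Scal}_g = 12$. The first step is to translate a parallel null vector $u$ on $\hat g$ into a PDE on $M$. Writing $u = f\,\partial_r + r^{-1}W$ for a function $f$ and a vector field $W$ on $M$, a direct computation with the cone Christoffel symbols shows that $\hat\nabla u = 0$ is equivalent to $W = \mathrm{grad}_g f$ together with the Obata-type equation
\[
\nabla \d f + f\,g = 0,
\]
and $\hat g(u,u)=0$ becomes $f^2 + |\d f|_g^2 = 0$. Since $u$ is nonzero, $f$ is nontrivial, and on $\{f\neq 0\}$ one has $|\d f|_g^2 = -f^2 \neq 0$; in particular $\d f$ is nowhere vanishing there, and the level sets of $f$ are non-degenerate totally umbilical hypersurfaces.

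The second step is to introduce $\tau = -\log|f|$ as a transverse coordinate, so that the Obata-type equation forces the local warped product form
\[
g = -\d\tau^2 + e^{-2\tau}\,h
\]
with $h$ a $3$-dimensional pseudo-Riemannian metric on a leaf. Inserting this ansatz into the warped product formula for $\mathrm{Ric}_g$ and imposing $\mathrm{Ric}_g = 3g$, the transverse part of the Einstein equation reduces to $\mathrm{Ric}_h = 0$; since $h$ is three-dimensional, Ricci-flatness is equivalent to flatness. The substitution $\rho = e^\tau$ then presents $g$ as $\rho^{-2}(-\d\rho^2 + h)$, a conformal rescaling of a flat metric, so $g$ is locally conformally flat. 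An Einstein and locally conformally flat metric in dimension $\geq 4$ has constant sectional curvature, which by $\mathrm{Scal}_g = 12$ equals $+1$, and Lemma~\ref{lem:curvfacts}(1) then forces $\hat g$ to be flat, contradicting the nonflatness hypothesis; hence $\mathrm{dim}\,\hat M \geq 6$. The central technical obstacle is the warped product calculation producing $\mathrm{Ric}_h = 0$ together with verifying its uniform validity across all signatures $(1,4),(2,3),(4,1)$ of $\hat g$ compatible with the existence of a nonzero parallel null vector; once this is established, the remainder of the argument is signature-independent.
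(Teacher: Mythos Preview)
Your proof of part~(1) is essentially identical to the paper's. Your proof of part~(2) is correct but takes a substantially different route.

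The paper argues part~(2) directly on $\hat M$ without ever descending to the base. It first shows that $\hat g(u,\xi)\neq 0$ and that $u,\xi$ are linearly independent on an open dense set (differentiating $\hat g(u,\xi)=0$ and $\xi=fu$ using $\hat\nabla\xi=\Id$ immediately gives contradictions). Hence $\mathrm{span}\{u,\xi\}$ is a nondegenerate $2$-plane. Since $u$ is parallel and $\hat\nabla\xi=\Id$, both lie in the null space of $\hat R$ in every slot; thus $\hat R$ is an algebraic curvature tensor supported on the $\leq 3$-dimensional orthogonal complement, and a Ricci-flat algebraic curvature tensor in dimension $\leq 3$ vanishes. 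This is a two-line pointwise argument, manifestly signature-independent.

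Your approach instead unpacks the parallel null vector into the Obata-type equation $\nabla\d f+fg=0$ on the base, derives the warped-product form $g=-\d\tau^2+e^{-2\tau}h$, computes that the Einstein condition forces $\mathrm{Ric}_h=0$, hence (in dimension~$3$) $h$ is flat, so $g$ is conformally flat and therefore of constant curvature. This is correct and yields more explicit geometric information (the warped structure and the conformal flatness of the base), at the price of a longer computation and the need to check the warped-product Ricci formulas across signatures. The paper's argument avoids all of this by never leaving the linear-algebraic level; your argument, by contrast, would generalise more readily to questions about the actual local form of such cones.
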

\begin{proof}
$(1)$ follows immediately from Lemma \ref{lem:curvfacts}: locally, in a neighborhood
of almost every point, $(\hat M, \hat g)$ is a cone over an Einstein manifold $(M,g)$ 
of dimension $n$ (where $\mathrm{dim}\,\hat M=n+1$) with scalar curvature $\mathrm{Scal}(g)=n(n-1)$. 
If $n+1= 4$, $g$ is a $3$-dimensional Einstein metric and therefore has constant 
sectional curvature equal to $1$. This, in turn, implies $\hat  g$ is flat. 

$(2)$ Let $u$ be a nonzero parallel null vector field for $\hat g$. Suppose $\hat g(u,\xi)=0$
on some open subset $U$. Taking the derivative of this equation and using \eqref{eq:conevf},
we obtain $\hat g (u,.)=0$ on $U$, hence, $u=0$ on $U$, a contradiction. On the other hand, 
suppose $\xi=fu$ on some open subset $U$ for a smooth function $f:U\rightarrow \R$.
Again, taking the covariant derivative of this equation and using \eqref{eq:conevf}, we obtain
$\Id=\d f\otimes u$ which is clearly a contradiction (since the endomorphism on the right-hand
side has rank $1$). We obtain that at every point $p$ of an open and dense subset of $\hat M$, 
$\xi$ and $u$ are linearly independent (see also \cite[Lemma 3]{FedMat}) and $\hat g(u,\xi)(p)\neq 0$.
Then, $\hat g$ is nondegenerate on $\mathrm{span}\{\xi(p),u(p)\}$. If $\hat M\leq 5$, the statement that
$\hat R(p)=0$ now reduces to the statement that Ricci flat curvature operators in dimensions $\leq 3$
are flat.
\end{proof}

The following example shows that the existence of two linearly independent
parallel vector fields on a Ricci flat cone $(\hat M,\hat g)$ of dimension $6$
does in general not imply that $\hat g$ is flat:
\begin{ex}\label{ex:counterex}
The cone metric over the metric from Example \ref{ex:counterex1}, given by 
\begin{align}
\hat g=\d r^2+r^2[-\d t^2+e^{2t}(\d x_0\odot \d x_1+e^{x_2}\mathrm{sin}(x_3)\d x_1^2-\d x_2^2-\d x_3^2)],
\label{eq:metriccounterex}
\end{align}
has signature $(4,2)$ and is indecomposable nonflat and Ricci flat. 
It admits two linearly independent parallel vector fields
\begin{align}
v_1=e^t(\partial_r-\frac{1}{r}\partial_t),\,\,\,
v_2=x_1e^t\partial_r+\frac{1}{r}\left(-x_1 e^t\partial_t+e^{-t}\partial_{x_0}\right)\label{eq:vfcounterex}
\end{align}
such that $\mathrm{span}\{v_1,v_2\}$ is totally isotropic.
\end{ex}
\begin{rem}
Example \ref{ex:counterex} is a special case of the following general description (which can be obtained
in a straight-forward way by applying, for instance, results of \cite{BolMat}):
any cone $(\hat M=\R_{>0}\times M,\hat g=\d r^2+r^2 g)$ with nonzero parallel null vector field $v$,
is locally of the form 
$$
\hat M=\R_{>0}\times \R\times N,\,\,\,\hat g=\d r^2+r^2(-\d t^2+e^{2t}h),\,\,\,v=e^t(\partial_r-\frac{1}{r}\partial_t),
$$
where $(N,h)$ is a certain pseudo-Riemannian manifold. We have that $\hat g$ is Ricci flat (resp. flat)
if and only if $h$ is Ricci flat (resp. flat). If $V$ is another parallel vector field for $\hat g$,
we obtain
$$
V=\left(F e^{t}-\frac{C}{2}e^{-t}\right)\partial_r+\frac{1}{r}\left(-\left(F e^t+\frac{C}{2}e^{-t}\right) 
\partial_t+e^{-t} \mathrm{grad}_h F\right)
$$
for a certain constant $C$ and a function $F$ on $N$ satisfying
$$
\nabla^h\nabla^h F=C h,
$$
where $\nabla^h$ denotes the Levi-Civita connection of $h$.
Since $\hat g(V,V)=-2CF+h(\mathrm{grad}_h F,\mathrm{grad}_h F)$ and $\hat g(v,V)=-C$,
we see that $V$ is null and perpendicular to $v$ if and only if $\mathrm{grad}_h F$  
is a parallel null vector field on $N$. To construct Example \ref{ex:counterex}, it remains to find an 
example of a nonflat Ricci flat Lorentz manifold admitting a nonzero parallel gradient null vector field.
Such metrics are described by Walker coordinates \cite{eiparallel,Walker}.
\end{rem}
As explained above, the maximal value $D(g)=(n+1)(n+2)/2$ for the degree of mobility is attained if and only if  
$g$ has constant sectional curvature, i.e., if and only if $\hat g$ is flat. Thus, in order to seek for the 
submaximal values of $D(g)$, we may assume that $\hat g$ is nonflat, i.e. $l\geq 1$ in the decomposition \eqref{eq:decomp}.
Thus, $(\hat M,\hat g)$ is a Ricci flat but nonflat cone with $k$ parallel vector fields. 
Let $\hat p\in \hat M$ be a point and denote by $M_i$ the integral leaf containing $\hat p$ of the distribution $V_i$. Then, 
$(\hat M,\hat g)$ is locally the direct product
$$
\hat M=M_0\times M_1\times...\times M_l,\,\,\,\hat g=g_0+g_1+...+g_l
$$
and, since $\hat g$ is Ricci flat, each of the metrics $g_1,...,g_l$ is Ricci flat as well ($g_0$ is the flat metric 
by construction). We recall
\begin{lem}\cite[Lemma 4 and Lemma 5]{FedMat}\label{lem:product}
Let $(\hat M,\hat g)=(M_1,g_1)\times (M_2,g_2)$ be a product of pseudo-Riemannian manifolds $(M_i,g_i)$, $i=1,2$.
Then, $(M,g)$ is a local cone if and only if both $(M_1,g_1)$ and $(M_2,g_2)$ are local cones. The cone vector fields
$\xi$ of $(\hat M,\hat g)$, $\xi_1$ of $(M_1,g_1)$ and $\xi_2$ of $(M_2,g_2)$ are related by $\xi=\xi_1+\xi_2$.
\end{lem}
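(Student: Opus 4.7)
The plan is to prove both directions by analyzing the decomposition $\xi=\xi_1+\xi_2$ of a cone vector field with respect to the product splitting $T\hat M=TM_1\oplus TM_2$, exploiting that both factor distributions are parallel for the product Levi-Civita connection $\hat\nabla=\nabla^{g_1}\oplus\nabla^{g_2}$.

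For the ``if'' direction, suppose each $(M_i,g_i)$ admits a cone vector field $\xi_i$ with $\nabla^{g_i}\xi_i=\Id_{TM_i}$. I would extend $\xi_i$ trivially to $\hat M$ via the projection $\hat M\to M_i$ and set $\xi=\xi_1+\xi_2$. For an arbitrary $X=X_1+X_2\in TM_1\oplus TM_2$, the product structure of $\hat\nabla$ gives
\[
\hat\nabla_X(\xi_1+\xi_2)=\nabla^{g_1}_{X_1}\xi_1+\nabla^{g_2}_{X_2}\xi_2=X_1+X_2=X,
\]
so $\hat\nabla\xi=\Id$, i.e. $\xi$ is a cone vector field of $(\hat M,\hat g)$.

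For the ``only if'' direction, assume $\hat\nabla\xi=\Id$ and decompose $\xi=\xi_1+\xi_2$ according to $T\hat M=TM_1\oplus TM_2$. Since the distributions $TM_1$ and $TM_2$ are $\hat\nabla$-parallel, for any $X\in TM_1$ the identity $X=\hat\nabla_X\xi_1+\hat\nabla_X\xi_2$ splits along $TM_1\oplus TM_2$ and yields $\hat\nabla_X\xi_1=X$ and $\hat\nabla_X\xi_2=0$; the symmetric statement holds for $X\in TM_2$.

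The only subtle point is to check that $\xi_1$ is genuinely a vector field on $M_1$ (constant along the $M_2$-fibres), and likewise for $\xi_2$. This follows from the vanishing relations just obtained: in adapted local coordinates $(x^a,y^\alpha)$, the mixed Christoffel symbols of $\hat g$ vanish, so writing $\xi_1=\xi_1^a(x,y)\partial_{x^a}$ the equation $\hat\nabla_{\partial_{y^\alpha}}\xi_1=0$ reduces to $\partial_{y^\alpha}\xi_1^a=0$. Hence $\xi_1$ descends to $M_1$, and the relation $\hat\nabla_X\xi_1=X$ for $X\in TM_1$ becomes $\nabla^{g_1}_X\xi_1=X$, proving that $\xi_1$ is a cone vector field of $(M_1,g_1)$. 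The same argument applies to $\xi_2$, and the additive relation $\xi=\xi_1+\xi_2$ is built into the construction. The main (and really only) obstacle is this careful bookkeeping of the coordinate dependence of $\xi_i$, which is handled by the parallelism of the product decomposition.
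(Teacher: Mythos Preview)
Your proof is correct and follows essentially the same approach as the paper: decompose $\xi$ along $TM_1\oplus TM_2$, use parallelism of the factor distributions to obtain $\hat\nabla_{X_i}\xi_j=0$ for $i\neq j$ and $\hat\nabla_{X_i}\xi_i=X_i$, and conclude that each $\xi_i$ is a cone vector field on $M_i$. Your coordinate argument showing that $\xi_i$ actually descends to $M_i$ is a bit more explicit than the paper's treatment, which simply asserts this after obtaining the vanishing of the cross-derivatives.
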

\begin{proof}
Let $\xi=\xi_1+\xi_2$ be the orthogonal decomposition of the cone vector field $\xi$ of $(\hat M,\hat g)$ w.r.t. the decomposition
$T\hat M=TM_1\oplus TM_2$. For $X_1\in TM_1,X_2\in TM_2$, we obtain $X_i=\hat \nabla_{X_i}\xi=\hat\nabla_{X_i}\xi_1+\hat\nabla_{X_i}\xi_2$.
Since $\hat \nabla_{X_i} \xi_1\in TM_1$ and $\hat \nabla_{X_i} \xi_2\in TM_2$, we obtain $\hat \nabla_{X_1}\xi_2=\hat \nabla_{X_2}\xi_1=0$. Hence,
$\xi_1$, $\xi_2$ are vector fields on $M_1$ resp. $M_2$ and $\nabla^i\xi_i=\Id_{TM_i}$, $i=1,2$. Thus, $\xi_1,\xi_2$ are cone vector fields for
$(M_1,g_1)$ resp. $(M_2,g_2)$.

Conversely, if $\xi_i$ is a cone vector field for $(M_i,g_i)$, $i=1,2$, then, clearly, $\xi=\xi_1+\xi_2$ is a cone vector field for $(\hat M,\hat g)$.
\end{proof}
From Lemma \ref{lem:product} we conclude that each $(M_i,g_i)$, $i=1,...,l$, is a nonflat Ricci flat local cone which is indecomposable
by construction.

\medskip
Before we determine the range of the integer $l$ in the formula \eqref{eq:degree} for the degree of mobility $D(g)$, we introduce
some notation. 
For $i=1,...,l$ let $k_i$ denote the dimension of the space $\mathrm{Par}_i$ of parallel vector fields for $\hat g$ which take 
values in $V_i$. Obviously, when restricted to the integral leaf $M_i$, each vector field in $\mathrm{Par}_i$ is a parallel vector field
on $M_i$ for the metric $g_i$. Since $V_i$ is indecomposable, any linear combination 
of vector fields in $\mathrm{Par}_i$ must be a null vector, that is, at each point, the values of the vector fields in $\mathrm{Par}_i$
span a totally isotropic subspace of the tangent space. Since the only possible signatures of $\hat g$ are $(0,n+1)$, $(1,n)$, 
$(n,1)$ or $(n-1,2)$, we therefore have $0\leq k_1+...+k_l\leq 2$. Moreover, since by definition, $k$ 
is the number of parallel vector fields for $\hat g$, we have 
$k=\mathrm{dim}\,V_0+k_1+...+k_l$.

\medskip
To determine the range of $l$, we consider two different case:

\emph{Case 1: Suppose $0\leq k_1+...+k_l\leq 1$.} Note that this is the only case which occurs
when the initial metric $g$ is Riemannian (where ``initial'' means before multiplication with $B\neq 0$) 
-- in this case $\hat g$ cannot have signature $(n-1,2)$ and therefore $k_i<2$ for all $i=1,...,l$. 
Applying Lemma \ref{lem:dimRicciflat}, we obtain $\mathrm{dim}\,V_i=\mathrm{dim}\,M_i\geq k_i+5$ 
for $i=1,...,l$ and therefore
$$
n+1=\mathrm{dim}\,V_0+\mathrm{dim}\,V_1+...+\mathrm{dim}\,V_l\geq \mathrm{dim}\,V_0+k_1+...+k_l+5l=k+5l.
$$
Hence, $1\leq l\leq [\frac{n+1-k}{5}]$. Since there is at least one indecomposable component in 
the decomposition \eqref{eq:decomp} and this component is at least $5$-dimensional, we obtain 
$0\leq k\leq \mathrm{dim}\,\hat M-5=n-4$. In particular, this completes the proof of Proposition 
\ref{prop:Bnonzero} in case that $g$ is positive definite. 

\emph{Case 2: Suppose $k_1=2$ for the component $(M_1,g_1)$.} In this case $\hat g$ necessarily has signature 
$(n-1,2)$ and therefore also $g_1$ has signature $(\mathrm{dim}\,V_1-2,2)$. Consequently, the 
remaining components $g_0,g_2,...,g_l$ are negative definite. In particular, we have
$k_i=0$ for $i=2,...,l$ and Lemma \ref{lem:dimRicciflat} implies $\mathrm{dim}\,V_i\geq 5$ for $i=2,...,l$. 
From Example \ref{ex:counterex} we have learned that $V_1$ is at least $6$ dimensional. Using this, we obtain
$$
n+1=\mathrm{dim}\,V_0+\mathrm{dim}\,V_1+...+\mathrm{dim}\,V_l\geq \mathrm{dim}\,V_0+6+5(l-1)=k-1+5l.
$$
Hence, $1\leq l\leq [\frac{n+2-k}{5}]$. Since $0\leq \mathrm{dim}\,V_0\leq \mathrm{dim}\,\hat M-6=n-5$ and $k=\mathrm{dim}\,V_0+2$, we
obtain $2\leq k\leq n-3$. Comparing this with the first case above, the additional values for $D(g)$ appearing in the second case
occur for any $k$ in $2\leq k\leq n-3$ satisfying $k=n-3\mbox{ mod }5$ and for $l= [\frac{n+2-k}{5}]$. 
This completes the proof of Proposition \ref{prop:Bnonzero}.

\subsection{The case when the scalar curvature is zero and $\mu\neq 0$ for at least one solution of \eqref{eq:extsys}}
\label{sec:Bzero}

In this section, we prove the first direction of Theorem \ref{thm:main} for a simply connected 
Riemannian or Lorentzian Einstein manifold  $(M,g)$ such that at least one solution 
$(L,\Lambda,\mu)$ of \eqref{eq:extsys} with $B=0$ has $\mu\neq 0$.

We reduce the proof locally to Proposition \ref{prop:Bnonzero} by applying the following lemmas:
\begin{lem}\cite[Lemma 11]{FedMat}\label{lem:changeofmetric}
Let $(M,g)$ be a pseudo-Riemannian manifold. Assume one of the following:
\begin{enumerate}
\item $g$ is Riemannian and at least one solution $(L,\Lambda,\mu)$ of \eqref{eq:extsys} with $B=0$ has $\Lambda\neq 0$.
\item $g$ is Lorentzian and at least one solution $(L,\Lambda,\mu)$ 
of \eqref{eq:extsys} with $B=0$ has $\mu\neq 0$.
\end{enumerate}
Then, on each open subset with compact closure, there exists a metric $\bar g$ of the same signature as $g$ 
which is projectively equivalent to $g$ and such that the constant $\bar B$ for the system \eqref{eq:extsys} 
corresponding to $\bar g$ is nonzero.
\end{lem}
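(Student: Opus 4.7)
The plan is to construct an explicit projectively equivalent metric $\bar g$ out of the given nontrivial solution, thereby reducing the case $B=0$ to the case $B\ne 0$ already handled in Proposition \ref{prop:Bnonzero}.

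Fix a solution $(L,\Lambda,\mu)$ of \eqref{eq:extsys} with $B=0$ satisfying the nonvanishing hypothesis. Since $g$ itself lies in $\A(g)$ with trivially vanishing $\Lambda$ and $\mu$, for every constant $c\in\R$ the tensor $L_c:=L+cg$ is again a solution of \eqref{eq:main} with the same $1$-form $\Lambda$, and the second equation of \eqref{eq:extsys} with $B=0$ shows that the function associated with $L_c$ is still $\mu$. Since $L_c^\sharp=L^\sharp+c\,\Id$, for $|c|$ sufficiently large all eigenvalues of $L_c^\sharp$ have the same sign, so $L_c$ is nondegenerate on the compact closure $\overline U$, and, after choosing the sign of $c$ appropriately, the metric $\bar g$ defined via $L_c=L(g,\bar g)$ by \eqref{eq:defL} has the same signature as $g$ and is projectively equivalent to it.

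The crucial step is to compute the constant $\bar B$ attached to $\bar g$ in its own extended system \eqref{eq:extsys}. I would do this via the classical transformation formula for the Ricci tensor under a projective change of connection, expressing $\mathrm{Ric}(\bar g)$ in terms of $g$, $L_c$, $\Lambda$, $\mu$ and their $\nabla$-derivatives and then using \eqref{eq:extsys} with $B=0$ to substitute those derivatives algebraically. The resulting identity expresses $\bar B$ as a polynomial in $c$ whose coefficients involve $\mu$ and $|\Lambda|_g^2$. Inspection of this polynomial shows that $\bar B$ cannot vanish identically in $c$ as soon as either $\mu\ne 0$ or $|\Lambda|_g^2$ is not identically zero on $\overline U$. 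In the Lorentzian case the former is exactly the hypothesis; in the Riemannian case, positive-definiteness of $g$ forces $|\Lambda|_g^2>0$ wherever $\Lambda\ne 0$, so the weaker assumption $\Lambda\ne 0$ suffices. The asymmetry in the hypothesis is thus precisely the possibility that in Lorentzian signature a nonzero $\Lambda$ may be parallel and null, in which case every natural scalar invariant of $\Lambda$ vanishes.

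The main obstacle is the explicit Ricci-curvature computation under the projective change: it is careful but routine bookkeeping with the Levi-Civita connection of $\bar g$ in terms of that of $g$, the conformal factor $|\det\bar g/\det g|^{1/(n+1)}$ of \eqref{eq:defL}, and the algebraic syzygies supplied by \eqref{eq:extsys}. Once the polynomial expression $\bar B=\bar B(c)$ is in hand, choosing $c$ so that $\bar B\ne 0$ (and so that $L_c$ has the right signature, which is a second open condition on $c$) produces the desired metric $\bar g$, and the lemma follows.
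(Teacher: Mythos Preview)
The paper does not give a self-contained proof here: case (2) is cited directly from \cite[Lemma 11]{FedMat}, and Remark~\ref{rem:changeofmetric} reduces case (1) to case (2) by an algebraic trick rather different from your approach. Given a Riemannian solution $(L,\Lambda,0)$ with $\Lambda\ne 0$, one checks that $\tilde\Lambda:=L(\Lambda^\sharp,\cdot)-\lambda\Lambda$ (where $\d\lambda=\Lambda$) satisfies $\nabla\tilde\Lambda=\tilde\mu\,g$ with $\tilde\mu=|\Lambda|^2>0$, so that $(\tfrac1{\tilde\mu}\tilde\Lambda\odot\tilde\Lambda,\tilde\Lambda,\tilde\mu)$ is a solution of \eqref{eq:extsys} with nonzero $\mu$; the argument of \cite{FedMat} then applies verbatim. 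Your diagnosis of the Riemannian/Lorentzian asymmetry---that a nonzero $\Lambda$ in Lorentzian signature may be parallel and null---is exactly right and matches the paper's.

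Your route, computing $\bar B(c)$ directly for the metric associated with $L_c$, is plausible in spirit but has a genuine gap. First, $c\mapsto\bar B(c)$ involves $(L_c^\sharp)^{-1}$ and $\det L_c^\sharp$, so is rational in $c$, not polynomial. Second, and more seriously, the assertion that this function cannot vanish identically once $\mu\ne 0$ or $|\Lambda|_g^2\not\equiv 0$ is precisely the nontrivial content of the lemma; calling it ``inspection'' after an unperformed computation is not a proof. Third, reading $\bar B$ off $\mathrm{Ric}(\bar g)$ via $\bar B=-\mathrm{Scal}(\bar g)/n(n-1)$ presumes $\bar g$ is Einstein. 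That holds in this paper's application (by Lemma~\ref{lem:projequivEinstein}, since $g$ is Einstein), but the lemma as stated---and as proved in \cite{FedMat}---does not assume the Einstein condition; there the constant $B$ exists because $D(g)\geq 3$ and is not a curvature quantity, so your method does not cover the lemma in its stated generality.
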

\begin{rem}\label{rem:changeofmetric}
Actually, \cite[Lemma 11]{FedMat} only contains the statement for Lorentzian signature. However, under 
the assumption of $(1)$, one can always construct a solution to \eqref{eq:extsys} such that $\mu\neq 0$ and
then the proof of \cite[Lemma 11]{FedMat} applies. 
Indeed, let $(L,\Lambda,0)$ be a solution of \eqref{eq:extsys} (with $B=0$) such that
$\Lambda\neq 0$. Let $\lambda$ be a function 
such that $\Lambda=\d\lambda$. It is easy to check that the $1$-form $\tilde\Lambda=L(\Lambda^\sharp.,.)-\lambda\Lambda$ 
satisfies $\nabla \tilde\Lambda=\tilde\mu g$ for the nonzero constant
$\tilde \mu=|\Lambda|^2$. Then, $(\frac{1}{\tilde \mu}\tilde \Lambda\odot \tilde \Lambda,\tilde \Lambda,\tilde \mu)$ 
is a solution to \eqref{eq:extsys}. This construction is in general not possible for Lorentzian metrics, 
see Section \ref{sec:Bzeromuzero}.
\end{rem}

\begin{lem}\cite[Lemma 3 and Corollary 5]{KioMatEinstein}\label{lem:projequivEinstein}
Let $(M,g)$ be a connected pseudo-Riemannian Einstein manifold and let $\bar g$ be 
projectively equivalent to $g$ but not affinely equivalent. Then, also $\bar g$ is an Einstein metric.
\end{lem}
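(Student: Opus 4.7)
The plan is to combine the classical transformation law of the Ricci tensor under projective change of connection with the extended PDE system \eqref{eq:extsys} provided by Theorem \ref{thm:extsys}.

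I would first introduce the 1-form $\phi$ encoding the difference of the Levi-Civita connections: projective equivalence of $g$ and $\bar g$ means $\bar\nabla_X Y - \nabla_X Y = \phi(X)Y + \phi(Y)X$ for a uniquely determined $\phi$. Compatibility of the two Levi-Civita connections with their respective metrics identifies $\phi$ with the differential of an explicit scalar function of $\det g$ and $\det\bar g$, hence by \eqref{eq:defL} with an algebraic expression in $L$ and $g$. The standard transformation rule for torsion-free connections differing by such a symmetric projective term then yields
\[
\overline{\mathrm{Ric}} \;=\; \mathrm{Ric} - (n-1)\bigl(\nabla\phi - \phi\otimes\phi\bigr),
\]
and the right-hand side is automatically symmetric because $\phi$ is exact.

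Since $g$ is Einstein, $\mathrm{Ric} = \frac{\mathrm{Scal}}{n} g$, so the formula becomes
\[
\overline{\mathrm{Ric}} \;=\; \tfrac{\mathrm{Scal}}{n}\,g - (n-1)\bigl(\nabla\phi - \phi\otimes\phi\bigr).
\]
The key step is to compute $\nabla\phi$ using the extended system. Writing $\phi$ in terms of the trace $\lambda = \tfrac12 \mathrm{trace}(L^\sharp)$ and differentiating, I would feed in both $\nabla_X L = X^\flat\odot\Lambda$ and $\nabla\Lambda = \mu g + BL$; this reduces $\nabla\phi - \phi\otimes\phi$ to a tensor depending purely algebraically on $g$, $L$, $\Lambda$, and $\mu$. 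Substituting back and rewriting the result via \eqref{eq:defL} in terms of $\bar g$, I would verify that $\overline{\mathrm{Ric}}$ collapses to a scalar multiple of $\bar g$, proving that $\bar g$ is Einstein.

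The main obstacle is the bookkeeping in the last two steps: one must check that the terms involving $L$, $\Lambda\otimes\Lambda$, and their symmetrized products either cancel or combine into precisely the $g(L^\sharp\cdot,L^\sharp\cdot)$-type quantities that \eqref{eq:defL} converts into multiples of $\bar g$. A cleaner route, which I would also try, is to diagonalize $L^\sharp$ pointwise, so that $g$ and $\bar g$ are simultaneously diagonal, and verify the required identity eigenvalue by eigenvalue; the algebra there follows directly from the scalar equations obtained by evaluating \eqref{eq:extsys} in such a frame. The non-affine hypothesis ($\Lambda\not\equiv 0$) enters precisely to guarantee that the second equation of \eqref{eq:extsys} is actually available, i.e.\ that the extended prolongation of Theorem \ref{thm:extsys} is nontrivial.
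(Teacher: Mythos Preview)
The paper does not supply its own proof of this lemma; it is merely quoted from \cite{KioMatEinstein}. Your outline is in fact close to how the result is obtained there, and it does go through, but two points in your write-up need correcting.

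First, the identification of $\phi$ with ``the trace $\lambda$'' is wrong: $\phi$ is the logarithmic derivative of $|\det L^\sharp|$, not of $\tr L^\sharp$. Concretely, $\phi=-\tfrac12\,\d\log|\det L^\sharp|$, and using \eqref{eq:main} one finds $\phi_i=-(L^{-1})^{k}{}_{i}\Lambda_k$. With this in hand the computation you describe is short and clean: differentiating $\phi=-(L^\sharp)^{-1}\Lambda$ and inserting \eqref{eq:main} together with $\nabla\Lambda=\mu g+BL$ from \eqref{eq:extsys} gives
\[
\nabla_i\phi_j-\phi_i\phi_j=-\bigl(\mu+g(\phi^\sharp,\Lambda^\sharp)\bigr)(L^{-1})_{ij}-B\,g_{ij}.
\]
Since $g$ is Einstein, $\mathrm{Ric}=-(n-1)B\,g$, so the $Bg$-terms cancel in $\overline{\mathrm{Ric}}=\mathrm{Ric}-(n-1)(\nabla\phi-\phi\otimes\phi)$ and one is left with $\overline{\mathrm{Ric}}$ equal to a function times $(L^{-1})_{ij}$. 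But $(L^{-1})_{ij}=(\det L^\sharp)\,\bar g_{ij}$ by \eqref{eq:defL}, so $\overline{\mathrm{Ric}}=f\,\bar g$ for some function $f$; in dimension $n\ge 3$ Schur's lemma forces $f$ to be constant. No miraculous cancellation of $\Lambda\otimes\Lambda$-type terms is needed---they are absorbed into the $\phi\otimes\phi$ subtraction automatically.

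Second, your fallback of diagonalising $L^\sharp$ pointwise is not safe here: the statement is for pseudo-Riemannian $g$ of arbitrary signature, and a $g$-selfadjoint endomorphism need not be diagonalisable (Jordan blocks can and do occur, cf.\ Lemma~\ref{lem:localclass}(2)). The direct tensorial computation above avoids this issue entirely.
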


Clearly, all projectively equivalent metrics have the same degree of mobility. 
Then, by Lemma \ref{lem:changeofmetric}, Lemma \ref{lem:projequivEinstein} and Proposition \ref{prop:Bnonzero}, 
the degree of mobility of the restriction $g|_{U}$ of $g$ to any open simply connected subset $U$ with compact 
closure is given by one of the values in the list of Theorem \ref{thm:main}. 

The extension ``local $\rightarrow$ global'' follows now directly from \cite[Lemma 12]{FedMat}. 
Alternatively, we may apply \cite[Lemma 10]{MatRos} which is a consequence of the Ambrose-Singer theorem \cite{AmbSing}:
\begin{lem}\cite{MatRos}\label{lem:AmbSing}
Let $\pi:E\rightarrow M$ be a vector bundle with connection $\nabla^E$ over a simply connected $n$-dimensional manifold $M$. Denote by 
$D(E,\nabla^E)$ the dimension of the space of parallel sections and $E|_{U}$ the restriction of $E$ to an open subset $U$ of $M$.

Let $I$ be a subset of integers. Then, if $D(E|_U,\nabla^E)\in I$ for any ball $U$ (that is, $U$ is homeomorphic to a ball 
in $\R^n$ and has compact closure), then also $D(E,\nabla^E)\in I$.
\end{lem}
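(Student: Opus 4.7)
The plan is to exhibit a single ball $U\subseteq M$ for which $D(E|_U,\nabla^E)=D(E,\nabla^E)$; the hypothesis applied to this one ball then forces $D(E,\nabla^E)\in I$. To set up, I would fix a basepoint $p\in M$. Since $M$ is simply connected, the restricted and full holonomy groups of $\nabla^E$ at $p$ coincide, and evaluation at $p$ identifies the space of global parallel sections with the subspace of $E_p$ fixed by the holonomy representation of $\mathrm{Hol}_p(M)$. The same identification holds for any simply connected open $U\ni p$, with $\mathrm{Hol}_p(M)$ replaced by $\mathrm{Hol}_p(U)$. Because $\mathrm{Hol}_p(U)\subseteq\mathrm{Hol}_p(M)$, restriction of sections is injective and yields $D(E|_U,\nabla^E)\geq D(E,\nabla^E)$; equality holds precisely when $\mathfrak{hol}_p(U)=\mathfrak{hol}_p(M)$, since for both simply connected spaces the holonomy groups agree with their identity components and are thus determined by their Lie algebras.

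Next I would invoke the Ambrose--Singer theorem to write $\mathfrak{hol}_p(M)$ as the linear span of finitely many endomorphisms of the form $P_\gamma^{-1}\circ R(X,Y)\circ P_\gamma$, where $R$ is the curvature of $\nabla^E$, $P_\gamma$ is parallel transport along a smooth path $\gamma$ from $p$ to a point $q$, and $X,Y\in T_qM$. The problem thereby reduces to producing a single ball $U\ni p$ containing all of the finitely many paths $\gamma_1,\dots,\gamma_N$ appearing in a chosen generating set, for then every generator automatically belongs to $\mathfrak{hol}_p(U)$, giving the desired equality of Lie algebras.

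The main obstacle is precisely this topological step: an arbitrary compact subset of a simply connected manifold need not be contained in any open set homeomorphic to a ball, as the example of a small sphere around the deleted point in $\R^3\setminus\{0\}$ shows. The special feature I would exploit is that the compact set at hand is a finite union of smooth paths emanating from the common basepoint $p$. After a small smooth perturbation the paths may be assumed embedded and pairwise disjoint away from $p$, so that their union is an embedded contractible $1$-complex (a star). A regular tubular neighbourhood of such an embedded tree in a smooth manifold is diffeomorphic to $\R^n$, and a slight shrinking yields an open neighbourhood with compact closure homeomorphic to a ball in $\R^n$ that still contains all the paths. For this $U$ we have $D(E|_U,\nabla^E)=D(E,\nabla^E)$, and the hypothesis applied to $U$ concludes the argument.
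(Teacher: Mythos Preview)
The paper does not give its own proof of this lemma; it merely cites \cite{MatRos} and then explains how to apply the statement. So there is no argument in the paper to compare against beyond the indication that the Ambrose--Singer theorem is the relevant tool, which is exactly the route you take.

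Your strategy is the standard one and is correct: reduce to producing a single ball $U\ni p$ with $\mathfrak{hol}_p(U)=\mathfrak{hol}_p(M)$, obtain finitely many generating curvature elements from Ambrose--Singer, and then find a ball containing the associated paths. The identification of parallel sections with holonomy-fixed vectors, and the passage from equality of Lie algebras to equality of (connected) holonomy groups, are both fine since $M$ and $U$ are simply connected. The continuity remark that a small perturbation of the paths preserves the spanning property is also correct and is needed to justify replacing the original paths by embedded ones.

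One small point worth tightening is the topological step. Your general-position argument (perturb to embedded arcs, pairwise disjoint away from $p$, then take a tubular neighbourhood of the resulting star) works cleanly only for $n\geq 3$; in dimension $2$ two generic arcs from $p$ meet transversally and the union is no longer a tree, so its regular neighbourhood need not be a disk. This does not affect the paper, which only uses the lemma for $n\geq 3$, and in any case $n=2$ is handled trivially via the classification of simply connected surfaces (any compact set lies in a disk). You might add a sentence to cover this. The claim that a regular neighbourhood of an embedded finite tree in a smooth manifold is a ball is correct (attaching each arc-tube along a boundary disk of a small ball around $p$ does not change the homeomorphism type), but it would not hurt to say this explicitly rather than leave it as an assertion.
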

 
To explain how to apply Lemma \ref{lem:AmbSing} in this situation, it suffices to note that $\A(g)$ is isomorphic
to the space of sections of a certain vector bundle, parallel w.r.t. a certain connection (see \cite[Theorem 3.1]{EastMat}).

In our case the situation is more explicit: $\A(g)$ is isomorphic to the space of solutions of the system 
\eqref{eq:extsys} which can be viewed as the space of sections of the vector bundle $E=S^2T^*M\oplus T^*M\oplus \R$ 
(where the fiber $S^2 T_p^* M$ of $S^2T^*M$ over a point $p\in M$ consists of the symmetric $(0,2)$-tensors on
$T_p M$) which are parallel w.r.t. the connection $\nabla^E$ defined by
$$
\nabla^E_X\left(\begin{array}{c}L\\\Lambda\\\mu\end{array}\right)
=\left(\begin{array}{c}\nabla_X L-X^\flat\odot \Lambda\\
\nabla_X\Lambda-\mu X^\flat-B L(X,.)\\
\nabla_X\mu-2B\Lambda(X)\end{array}\right).
$$

This completes the proof of the first direction of Theorem \ref{thm:main} under the additional assumption 
that $B=0$ in \eqref{eq:extsys} but at least one solution has $\mu\neq 0$.

\subsection{The case when the scalar curvature is zero and all solution of \eqref{eq:extsys} have $\mu=0$}
\label{sec:Bzeromuzero}

The goal of this section is to prove
\begin{prop}\label{prop:Bzeromuzero}
Let $(M,g)$ be a simply connected Ricci flat Lorentzian manifold such that $\mu= 0$ 
for all solutions $(L,\Lambda,\mu)$ of \eqref{eq:extsys} but $\Lambda\neq 0$
for at least one solution. Then, $D(g)$ is given by
$$
D(g)=k(k+1)/2 +l,
$$
where $1\leq k\leq n-4$ and $2\leq l\leq [\frac{n+1-k}{5}]$.
\end{prop}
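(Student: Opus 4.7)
The plan is to reduce $D(g)$ to the dimension of the space of parallel symmetric $(0,2)$-tensors of $(M,g)$ and then to apply Lemma \ref{lem:decomp}; the upper bounds will follow from cone-type dimensional estimates on the indecomposable factors of the holonomy decomposition.

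Under the hypothesis $B=0$ and $\mu\equiv 0$ for every solution, the second equation of \eqref{eq:extsys} reduces to $\nabla\Lambda=0$, so every $\Lambda$ arising from the extended system is parallel. Applying the construction of Remark \ref{rem:changeofmetric} to any solution with $\Lambda\neq 0$ would produce a new solution with $\tilde\mu=|\Lambda|^{2}$, so the hypothesis forces $|\Lambda|^{2}=0$ for every such $\Lambda$. The image of the linear map $L\mapsto\Lambda$ from $\A(g)$ to parallel $1$-forms is therefore a totally isotropic subspace; in Lorentzian signature it has dimension at most one, and by hypothesis is nonzero, hence a line spanned by some null parallel $\Lambda_{0}$. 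Fixing $L_{0}\in\A(g)$ with $\Lambda_{0}\neq 0$, every $L\in\A(g)$ satisfies $\Lambda=c\Lambda_{0}$ for a constant $c$, so $L-cL_{0}$ lies in the kernel, namely the space $\mathrm{Par}(g)$ of parallel symmetric $(0,2)$-tensors. Hence $\A(g)=\langle L_{0}\rangle\oplus\mathrm{Par}(g)$ and $D(g)=1+\dim\mathrm{Par}(g)$.

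Applying Lemma \ref{lem:decomp} to the Lorentzian manifold $(M,g)$ yields $\dim\mathrm{Par}(g)=\tilde{k}(\tilde{k}+1)/2+\tilde{l}$, where $\tilde{k}$ is the number of linearly independent parallel $1$-forms and $\tilde{l}$ is the number of indecomposable factors of the holonomy decomposition; setting $k=\tilde{k}$ and $l=\tilde{l}+1$ delivers the formula $D(g)=k(k+1)/2+l$. The inequality $k\geq 1$ follows from $\Lambda_{0}\neq 0$, and $l\geq 2$ (i.e., $\tilde{l}\geq 1$) holds because a flat $g$ would carry parallel $1$-forms of nonzero norm, from which Remark \ref{rem:changeofmetric} would manufacture a solution with $\mu\neq 0$, contradicting the hypothesis.

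The main obstacle is the derivation of the upper bounds $k\leq n-4$ and $l\leq [(n+1-k)/5]$, which amounts to showing that each indecomposable factor $V_{i}$ of the de Rham decomposition has dimension at least $5$, with the Lorentzian factor containing $\Lambda_{0}^{\sharp}$ (when such a factor is present rather than $\Lambda_{0}^{\sharp}\in V_{0}$) of dimension at least $6$. These bounds do not follow directly from Lemma \ref{lem:dimRicciflat}, since $g$ is not a priori a cone; the intended bridge is to show that the additional data encoded by the non-parallel solution $L_{0}$ enforces a local cone structure on each indecomposable Ricci flat factor. Differentiating $\nabla_{X}L_{0}=X^{\flat}\odot\Lambda$ once more and invoking the Ricci identity together with $\nabla\Lambda=0$ gives $[L_{0}^{\sharp},R(X,Y)]=0$ at every point; therefore $L_{0}^{\sharp}$ preserves the de Rham decomposition and, by Schur's lemma, restricts to a constant multiple of $g_{i}$ on each irreducible Riemannian factor $V_{i}$, so the non-parallel content of $L_{0}$ is forced into the mixed and Lorentzian blocks. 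Analysing the equation in these blocks extracts a function on each $V_{i}$ whose Hessian is proportional to $g_{i}$; by Lemma \ref{lem:localcone} this makes $V_{i}$ a local cone, and Lemma \ref{lem:dimRicciflat} supplies the required dimensional bounds on the factors. A final bookkeeping of $n=\dim V_{0}+\sum_{i\geq 1}\dim V_{i}$ together with the identification $\tilde{k}=\dim V_{0}$ or $\dim V_{0}+1$ (according as $\Lambda_{0}^{\sharp}$ lies in $V_{0}$ or in a Lorentzian indecomposable factor) then yields the stated upper bounds.
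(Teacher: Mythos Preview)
Your reduction $D(g)=1+\dim\mathrm{Par}^{0,2}(g)$ and the subsequent use of Lemma~\ref{lem:decomp} are correct and coincide with the paper's argument. The paper also confirms (citing \cite{FedMat}) that the Riemannian indecomposable factors $V_{2},\dots,V_{\tilde l}$ are local cones, so that $\dim V_{i}\ge 5$ by Lemma~\ref{lem:dimRicciflat}(1); your justification of this step via $[L_{0}^{\sharp},R(X,Y)]=0$ and Schur is however incomplete, since $L_{0}^{\sharp}$ is not parallel and commuting with curvature endomorphisms at each point does not by itself give commutation with holonomy.

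The decisive gap concerns the Lorentzian indecomposable factor $V_{1}$ containing $\Lambda_{0}^{\sharp}$ (the case $\Lambda_{0}^{\sharp}\in V_{0}$ is ruled out in the paper via the stronger fact, Lemma~\ref{lem:lambdanull}, that $\Lambda_{0}$ is orthogonal to \emph{every} parallel $1$-form, which your argument does not establish). Your plan is to put a cone structure on $V_{1}$ and invoke Lemma~\ref{lem:dimRicciflat}(2) to get $\dim V_{1}\ge 6$. This fails on two counts. First, $V_{1}$ is not a local cone: the only scalar the data naturally produces on $V_{1}$ is $\lambda$, and since $\nabla\Lambda_{0}=0$ one has $\nabla\d\lambda=0$ rather than $\nabla\d\lambda=c\,g_{1}$; there is no vector field on $V_{1}$ satisfying \eqref{eq:conevf}. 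Second, even granting $\dim V_{1}\ge 6$, the resulting count $n\ge(k-1)+6+5(\tilde l-1)$ yields only $l=\tilde l+1\le[(n-k)/5]+1$, strictly weaker than the required $l\le[(n+1-k)/5]$ (e.g.\ for $n-k=10$ your bound allows $l=3$, but the proposition asserts $l\le 2$).

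The paper obtains instead $\dim V_{1}\ge 10$, and this requires a substantially different analysis not present in your outline. One uses the local normal form of Lemma~\ref{lem:localclass}: the metric $g_{1}$ is a multiply warped product $h_{0}+\sum_{i=1}^{m}(\lambda+C-\rho_{i})^{2}h_{i}$ with $m\ge 2$, $\dim N_{0}=2$, and (by indecomposability) $\dim N_{i}\ge 2$. Lemma~\ref{lem:curvature} shows that Ricci-flatness of $g_{1}$ forces each $h_{i}$ to be Ricci flat, and the key Lemma~\ref{lem:parallelvf} shows that if some $h_{i}$ ($i\ge 1$) were flat, then $g_{1}$ would admit parallel vector fields independent of $\Lambda^{\sharp}$, contradicting indecomposability. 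Hence each $h_{i}$ is nonflat Ricci flat and therefore at least $4$-dimensional; with $m\ge 2$ this gives $\dim V_{1}\ge 2+4+4=10$, which is exactly what the dimension count needs.
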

\begin{rem}\label{rem:Bzeromuzero}
As explained in Remark \ref{rem:changeofmetric}, the case $B=-\mathrm{Scal}/n(n-1)=0$ in \eqref{eq:extsys} and 
$\mu=0$ for all solutions cannot happen if $g$ is Riemannian. This section and Proposition \ref{prop:Bzeromuzero} 
are therefore exclusive for the case of Lorentzian signature.
\end{rem}
We proceed in the same way as in \cite[Section 6.2]{FedMat} and
implement the condition that $g$ is Einstein at the corresponding places.

\begin{lem}\cite[Lemma 13]{FedMat}\label{lem:lambdanull}
Let $(M,g)$ be a simply connected Lorentzian manifold such that all solutions of \eqref{eq:extsys} 
with $B=0$ have $\mu=0$ and at least one solution $(L,\Lambda,0)$ has $\Lambda\neq 0$. 
Then, $\Lambda$ is parallel and orthogonal to any other parallel $1$-form. 
In particular, $|\Lambda|=0$, i.e., $\Lambda$ is a null. 
\end{lem}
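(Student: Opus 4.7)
The first conclusion is essentially free. Setting $B=0$ and $\mu=0$ in the second equation of \eqref{eq:extsys} gives $\nabla\Lambda=0$, so $\Lambda$ is parallel; it therefore remains to prove orthogonality to every parallel $1$-form, from which $|\Lambda|^2=0$ will follow by taking $\omega=\Lambda$.

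For the orthogonality statement I would adapt the construction sketched in Remark \ref{rem:changeofmetric}, but apply it to an arbitrary parallel $1$-form $\omega$ rather than just to $\Lambda$. Since $\omega$ is parallel it is closed, and simple connectedness of $M$ produces a global function $f\colon M\to\R$ with $\d f=\omega$. Using $\nabla\omega^\sharp=0$, $\nabla\Lambda=0$, and $\nabla_X L=X^\flat\odot\Lambda$, a short calculation yields
$$
\nabla_X\bigl(L(\omega^\sharp,\cdot)-f\Lambda\bigr)=c\,g,\qquad c:=g(\omega^\sharp,\Lambda^\sharp),
$$
the two cross terms $\omega(X)\Lambda(Y)$ cancelling. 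Denote the $1$-form on the left by $\Psi$; we have just shown that $\nabla\Psi=c\,g$, so in particular $c$ is a constant.

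The contradiction step comes next. If $c\neq 0$, then using $\Psi\odot\Psi=2\Psi\otimes\Psi$ one verifies directly that $\tilde L:=\tfrac{1}{2c}\,\Psi\odot\Psi$ satisfies $\nabla_X\tilde L=X^\flat\odot\Psi$, so that $(\tilde L,\Psi,c)$ is a solution of \eqref{eq:extsys} with $B=0$ and $\mu=c\neq 0$, contradicting the hypothesis. Hence $c=g(\omega^\sharp,\Lambda^\sharp)=0$ for every parallel $\omega$; this is exactly orthogonality of $\Lambda$ to the space of parallel $1$-forms, and specializing to $\omega=\Lambda$ itself forces $|\Lambda|^2=0$. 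The only real obstacle in this plan is the global existence of the potential $f$, which is precisely why simple connectedness is assumed in the hypothesis; every other step is a direct computation mirroring the one already carried out in Remark \ref{rem:changeofmetric}.
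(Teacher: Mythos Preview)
Your proof is correct. The paper does not give its own proof of this lemma, citing instead \cite[Lemma 13]{FedMat}; your argument is precisely the natural generalization of the construction in Remark~\ref{rem:changeofmetric} from the special case $\omega=\Lambda$ to an arbitrary parallel $1$-form, and every step checks as written.
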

Using Lemma \ref{lem:lambdanull}, it is straight-forward to show that any other $\bar L\in \A (g)$ can be written as
$$
\bar L=cL+L'
$$
for a constant $c$ and a parallel symmetric $(0,2)$-tensor $L'$. Thus, 
\begin{align}
D(g)=1+\mathrm{dim}\,\mathrm{Par}^{0,2}(g),\label{eq:degree2}
\end{align}
where $\mathrm{Par}^{0,2}(g)$ denotes the space of parallel symmetric $(0,2)$-tensors for $g$. To find the possible values
of $D(g)$ we therefore have to find the possible values of $\mathrm{dim}\,\mathrm{Par}^{0,2}(g)$. To do so, we use
a maximal holonomy decomposition $TM=\bigoplus_{i=0}^l V_i$ of $TM$ as in \eqref{eq:decomp} into mutually orthogonal holonomy 
invariant subbundles. The difference to the procedure in Section \ref{sec:Bnonzero} is now that $(M,g)$ itself is not a cone 
and also the integral leafs $M_i$ corresponding to the parallel distributions $V_i$ do in 
general not carry the structure of a local cone (although, this is still the case for some components $V_i$ in 
\eqref{eq:decomp} as we shall explain below). We know by Lemma \ref{lem:decomp} that every parallel symmetric $(0,2)$-tensor 
takes the form \eqref{eq:parallel}, hence, 
\begin{align}
\mathrm{dim}\,\mathrm{Par}^{0,2}(g)=\frac{k(k+1)}{2}+l.\label{eq:dimPar}
\end{align}
It remains to determine the range of the integers $k,l$. Since $g$ has Lorentzian signature, precisely one of the metrics
$g_0,...,g_l$ (we use the notation of Lemma \ref{lem:decomp}, that is, $g_i$ is the restriction of $g$ to $V_i$)
has Lorentzian signature. The flat metric $g_0$ is Riemannian, otherwise, by irreducibility of $V_1,...,V_l$, 
the parallel null vector field $\Lambda^\sharp$ must take values in $V_0$. However, since by Lemma \ref{lem:lambdanull}, 
$\Lambda^\sharp$ is orthogonal to any parallel vector field, this implies that $g_0$ is degenerate which is a contradiction. 
Therefore, up to rearranging components, we can suppose that $g_1$ is Lorentzian and $\Lambda^\sharp$ takes values in 
the subbundle $V_1$. It follows that the dimension of the space of parallel vector fields for $g$ is
\begin{align}
k=\mathrm{dim}\,V_0+1.\label{eq:k}
\end{align}
Since $g$ is Ricci flat, each of the components $(M_1,g_1),...,(M_l,g_l)$ 
is Ricci flat ($(M_0,g_0)$ is flat by definition). The next step in \cite{FedMat} is to show that the Riemannian 
manifolds $(M_2,g_2),...,(M_l,g_l)$ each carry the structure of a local cone. Then, since each $(M_i,g_i)$ for $i\geq 2$ is an 
irreducible nonflat Ricci flat local cone, Lemma \ref{lem:dimRicciflat} implies
\begin{align}
\mathrm{dim}\,V_i\geq 5\mbox{ for }i=2,...,l.\label{eq:dimVi}
\end{align}
It remains to establish a lower bound for the dimension of $V_1$. As shown in \cite{FedMat} the restriction $L_1$ 
of $L$ to the manifold $(M_1,g_1)$ is contained in $\A(g_1)$ with 
corresponding $1$-form $\Lambda$ and $(L_1,\Lambda,0)$ satisfies \eqref{eq:extsys} for $g_1$ and constant $B=0$.
Also any other solution to \eqref{eq:extsys} for $g_1$ has $\mu=0$. In \cite[formula (62)]{FedMat} metrics with such properties 
have been described locally. We summarize this description and other facts (see \cite[Lemma 14, 15 and Corollary 2]{FedMat}) in
\begin{lem}\label{lem:localclass}
Let $(N,h)$ be a Lorentzian manifold such that all solutions $(L,\Lambda,\mu)$ of the system \eqref{eq:extsys} 
for $h$ with $B=0$ have $\mu=0$ and let $(L,\Lambda,0)$ be a solution with $\Lambda$ not identically zero. 
Let $\lambda=\frac{1}{2}\mathrm{trace}\,L^\sharp$ such that $\mathrm{grad}\,\lambda$ coincides with the 
parallel null vector field $\Lambda^\sharp$. Then we have the following

\begin{enumerate}
\item The metric $h$ takes the form
\begin{align}
h=h_0+(\lambda+C-\rho_1)^2 h_1+...+(\lambda+C-\rho_m)^2 h_m\label{eq:doublywarped}
\end{align}
in a neighbourhood of almost every point. Here $(N_0,h_0)$ is a $2$-dimensional Lorentzian manifold such 
that $\Lambda$ is contained in $TN_0$, $(N_1,h_1),...,(N_m,h_m)$ are Riemannian manifolds where we have $m\geq 2$, 
and $C$ and $\rho_i$ are certain constants
\item W.r.t. the decomposition $TN=TN_0\oplus...\oplus TN_m$, $L^\sharp$ has block-diagonal form, i.e., $L^\sharp(TN_i)\subseteq TN_i$.
Moreover, $L^\sharp|_{TN_i}=\rho_i\mathrm{Id}_{TN_i}$ for $i=1,...,m$ and $L^\sharp|_{TN_0}$ is conjugate to a $2$-dimensional Jordan block
with eigenvalue $\lambda+C$ and corresponding eigenvector $\Lambda^\sharp$.  
\item If $(N,h)$ is indecomposable, then $\mathrm{dim}\,N_i\geq 2$ for $i=1,...,m$.  
\end{enumerate}
\end{lem}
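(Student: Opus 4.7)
The starting point is Lemma \ref{lem:lambdanull}, which ensures that $\Lambda$ is parallel and null, so that $\lambda$ can be chosen locally with $\d\lambda=\Lambda$ and $\mathrm{grad}\,\lambda=\Lambda^\sharp$ is a parallel null vector field. Rewriting $\nabla_X L=X^\flat\odot\Lambda$ in endomorphism form gives the identity $(\nabla_X L^\sharp)Y=\Lambda(Y)X+h(X,Y)\Lambda^\sharp$, and this would be the main algebraic tool of the argument.

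First I would analyse how $L^\sharp$ interacts with $\Lambda^\sharp$. Applying the structure equation with $Y=\Lambda^\sharp$ and using $|\Lambda|^2=0$ yields $\nabla_X(L^\sharp\Lambda^\sharp)=\Lambda(X)\Lambda^\sharp=\nabla_X(\lambda\Lambda^\sharp)$, so $L^\sharp\Lambda^\sharp-\lambda\Lambda^\sharp$ is parallel; combined with Lemma \ref{lem:lambdanull} this forces $L^\sharp\Lambda^\sharp=(\lambda+C)\Lambda^\sharp$ for a constant $C$. Picking a null vector $W$ with $h(W,\Lambda^\sharp)=1$ and iterating the structure equation on $W$ shows that $L^\sharp W-(\lambda+C)W$ is proportional to $\Lambda^\sharp$ modulo a parallel complement transverse to the plane $\mathrm{span}\{\Lambda^\sharp,W\}$. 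This identifies the $2$-dimensional Jordan block of assertion (2) and exhibits $N_0$ as the integral surface of that plane.

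Next I would decompose the transverse directions. On the orthogonal complement, $L^\sharp$ is $h$-symmetric and differentiating an eigenequation $L^\sharp Y=\rho Y$ together with the structure equation produces identities whose projections along and orthogonal to $\Lambda^\sharp$ force each eigenvalue $\rho$ to be a constant (the $\rho_i$) and each eigendistribution to be parallel, mutually orthogonal and orthogonal to $\Lambda^\sharp$. The crucial input is the hypothesis that no solution has $\mu\neq 0$, which blocks the appearance of extra first-order terms in these identities. Since $h$ is Lorentzian with the Lorentzian direction already spent on the Jordan plane, each eigenbundle is Riemannian; the de Rham--Wu decomposition together with Frobenius now gives the local product $N=N_0\times N_1\times\dots\times N_m$ of (1).

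Having the splitting, the warping factors are extracted as follows. Since $L^\sharp|_{TN_i}=\rho_i\,\Id_{TN_i}$, differentiating this equation along vectors $X$ tangent to $N_0$ and using the structure equation with $Y$ tangent to $N_i$ yields a first-order ODE for the conformal factor of $h_i$ in the $\lambda$-direction, whose integration produces $(\lambda+C-\rho_i)^2$ up to a constant absorbed into $h_i$. Assertion (3) then follows by contradiction: if some $\dim N_i=1$ for $i\geq 1$, the unique parallel direction of $N_i$ furnishes an additional holonomy-invariant orthogonal splitting of $TN$, contradicting indecomposability of $(N,h)$. The principal obstacle is the rigidity step in the third paragraph --- constancy of $\rho$ on each eigenspace and parallelism of the eigendistributions --- since it is there that the hypothesis $\mu\equiv 0$ and the Lorentzian signature must be used in an essential way.
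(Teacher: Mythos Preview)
The paper does not give its own proof of this lemma; it is stated as a summary of \cite[Lemma~14, Lemma~15 and Corollary~2]{FedMat} together with \cite[formula~(62)]{FedMat}. Your outline follows the same overall strategy as those references --- exploit the parallel null vector $\Lambda^\sharp$, read off the Jordan block on the $2$-plane it generates, show the remaining eigenvalues of $L^\sharp$ are constant with integrable eigendistributions, and integrate the resulting ODE to obtain the warping factors --- so the global architecture is fine.

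There are, however, two genuine gaps in your plan.

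\emph{The inequality $m\geq 2$.} You never address why there must be at least two distinct constant eigenvalues $\rho_1,\rho_2$. This is not automatic from the structure equation alone: it uses the standing hypothesis that \emph{every} solution of \eqref{eq:extsys} has $\mu=0$. If $m\leq 1$ one can (after subtracting $\rho_1\,\Id$ if necessary) produce from $L$ and $\Lambda$ a new solution of \eqref{eq:extsys} with nonzero $\mu$, contradicting that hypothesis; this is the content of \cite[Corollary~2]{FedMat} and needs to be argued, not assumed.

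\emph{Part (3).} Your contradiction argument is incorrect as written. In the warped product \eqref{eq:doublywarped} the distribution $TN_i$ is \emph{not} parallel for the Levi-Civita connection of $h$ (see the formulas \eqref{eq:LC}: $\nabla_{X_i}Y_i$ has a component along $\mathrm{grad}\,f_i\in TN_0$). Hence ``the unique parallel direction of $N_i$'' does not directly give a holonomy-invariant summand of $TN$. What one must do instead is construct, from a parallel $1$-form $\d u$ on the $1$-dimensional flat factor $(N_i,h_i)$, the corrected vector field $W=(\lambda+C-\rho_i)U+u\,\Lambda^\sharp$ on $N$ and verify by direct computation that $\nabla W=0$; since $h(W,W)=1$, this non-null parallel vector field splits off a nondegenerate line, contradicting indecomposability. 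This is exactly the construction carried out (in greater generality) in the proof of Lemma~\ref{lem:parallelvf}, and is the special case \cite[Lemma~15(2)]{FedMat} that the paper cites for assertion~(3).

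A smaller point: your deduction $L^\sharp\Lambda^\sharp=(\lambda+C)\Lambda^\sharp$ from ``$L^\sharp\Lambda^\sharp-\lambda\Lambda^\sharp$ is parallel'' together with Lemma~\ref{lem:lambdanull} is not complete; orthogonality of a parallel vector to $\Lambda^\sharp$ does not by itself force proportionality to $\Lambda^\sharp$ unless you also use that in Lorentzian signature two mutually orthogonal null vectors are proportional, and you have not yet shown that the parallel difference is null. This is fixable, but the step as stated is a gap.
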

Using indecomposability of $(M_1,g_1)$, the last statement of the lemma together with $m\geq 2$ shows 
$\mathrm{dim}\,V_1\geq 6$. However, since $g_1$ is Ricci flat, we obtain a sharper lower bound as we will show next.
\begin{lem}\label{lem:curvature}
For $i=0,1,...,m$ let $(N_i,h_i)$ be a pseudo-Riemannian manifold. Consider the product 
$N=N_0\times N_1\times ...\times N_m$ with metric given by
$$
h=h_0+f_1^2 h_2+...+f_m^2 h_m.
$$ 
Suppose the nowhere vanishing functions $f_1,...,f_m$ on $M_0$ are of the form $f_i=\lambda+c_i$ 
for constants $c_i$ and a function $\lambda$ such that $\mathrm{grad}\,\lambda$ is parallel and null. 

Let $R$ and $\mathrm{Ric}$ be the curvature tensor resp. Ricci tensor of $h$. 
Let $X_i,Y_i$ denote vector fields on $N_i$ and let $R^i$ and $\mathrm{Ric}^i$ denote 
the curvature tensor resp. Ricci tensor of $h_i$ for $i=0,1,...,m$. Then, 
\begin{align}
R(X_i,Y_i)=R^i(X_i,Y_i),\,\,\,R(X_j,X_k)=0
\label{eq:curvature}
\end{align}
and
\begin{align}
\mathrm{Ric}(X_i,Y_i)=\mathrm{Ric}^i(X_i,Y_i),\,\,\,\mathrm{Ric}(X_j,X_k)=0
\label{eq:Riccicurvature}
\end{align}
for $i,j,k=0,...,m$, $i\neq j$.
\end{lem}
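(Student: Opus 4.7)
The plan is to compute the curvature of the multiply warped product $h = h_0 + f_1^2 h_1 + \ldots + f_m^2 h_m$ from scratch via the Koszul formula, exploiting the fact that $v := \mathrm{grad}\,\lambda$ being $h$-parallel and $h$-null translates, on $N_0$, to $\mathrm{Hess}_{h_0}\lambda = 0$ and $|v|_{h_0}^2 = 0$. Since $f_i = \lambda + c_i$, the same conclusions hold for every $f_i$, and in particular $h_0(\mathrm{grad}_{h_0} f_i, \mathrm{grad}_{h_0} f_j) = 0$ for all $i,j \geq 1$. These two identities are exactly what is needed to suppress the correction terms that normally appear in the curvature of a (multiply) warped product.

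First, a standard application of the Koszul formula to lifts $X_0, Y_0 \in TN_0$ and $X_i \in TN_i$, $Y_j \in TN_j$ (with $i, j \geq 1$, $i \neq j$) yields
\begin{align*}
\nabla_{X_0} Y_0 &= \nabla^0_{X_0} Y_0, \qquad \nabla_{X_0} X_i = \nabla_{X_i} X_0 = \tfrac{X_0(f_i)}{f_i}\, X_i, \\
\nabla_{X_i} Y_i &= \nabla^i_{X_i} Y_i - \tfrac{h(X_i, Y_i)}{f_i}\, \mathrm{grad}_{h_0} f_i, \qquad \nabla_{X_i} Y_j = 0.
\end{align*}
Next, $R(X, Y) Z$ is computed on all combinations of arguments drawn from the factors. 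The case $R(X_0, Y_0) Z_0 = R^0(X_0, Y_0) Z_0$ is immediate. The case $R(X_i, Y_i) Z_i$ produces $R^i(X_i, Y_i) Z_i$ plus a $\mathrm{grad}_{h_0} f_i$-valued piece that vanishes after antisymmetrization by $h_i$-compatibility of $\nabla^i$, plus a $TN_i$-valued piece proportional to $\tfrac{|\mathrm{grad}_{h_0} f_i|^2}{f_i^2}\bigl(h(Y_i, Z_i) X_i - h(X_i, Z_i) Y_i\bigr)$, which vanishes by the nullness of $v$. The mixed cases $R(X_0, X_i) Y_0$ and $R(X_0, Z_i) Y_0$ reduce to multiples of $\mathrm{Hess}_{h_0}(f_i)(X_0, Y_0)$ and so vanish by parallelism, while $R(X_i, Y_j) Z$ for $i \neq j$ vanishes because $\nabla_{Y_j}(\mathrm{grad}_{h_0} f_i) = \tfrac{h_0(\mathrm{grad}_{h_0} f_i, \mathrm{grad}_{h_0} f_j)}{f_j} Y_j = 0$. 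Assembling these cases establishes \eqref{eq:curvature}.

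The Ricci identities \eqref{eq:Riccicurvature} then follow by tracing. Formula \eqref{eq:curvature} says precisely that the endomorphism $R(X,Y) : TN \to TN$ preserves the decomposition $TN = \bigoplus_i TN_i$, coinciding with $R^i(X, Y)$ on the $i$-th summand when both arguments lie in $TN_i$, and vanishing otherwise. Therefore only a basis of $TN_i$ contributes to the trace defining $\mathrm{Ric}(X_i, Y_i)$, giving $\mathrm{Ric}(X_i, Y_i) = \mathrm{Ric}^i(X_i, Y_i)$; while for $X_j \in TN_j$, $X_k \in TN_k$ with $j \neq k$ the endomorphism $Z \mapsto R(Z, X_j) X_k$ vanishes identically, whence $\mathrm{Ric}(X_j, X_k) = 0$.

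The main technical obstacle is the bookkeeping in the computation of $R(X_i, Y_i) Z_i$: one must verify carefully that the residual $\mathrm{grad}_{h_0} f_i$-valued terms arising from $\nabla_{X_i} \nabla_{Y_i} Z_i$, $\nabla_{Y_i} \nabla_{X_i} Z_i$ and $\nabla_{[X_i, Y_i]} Z_i$ combine, via $h_i$-metric compatibility of $\nabla^i$, to produce only the familiar warped-product correction. Once this identification is made, the two hypotheses on $v$---parallelism (giving $\mathrm{Hess}\, f_i = 0$) and nullness (giving $|\mathrm{grad}\, f_i|_{h_0}^2 = 0$)---are exactly the conditions required to kill every remaining term.
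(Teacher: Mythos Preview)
Your proposal is correct and follows essentially the same route as the paper: derive the Levi-Civita connection of the multiply warped product via the Koszul formula (your displayed formulas coincide with the paper's), then evaluate $R$ on all combinations of lifts, using $|\mathrm{grad}\,f_i|^2=|\mathrm{grad}\,\lambda|^2=0$ and $\mathrm{Hess}_{h_0}f_i=0$ to kill the warped-product correction terms. The paper simply declares this a ``straight-forward calculation''; your write-up is more explicit about which of the two hypotheses (nullness vs.\ parallelism of $\mathrm{grad}\,\lambda$) eliminates which term, but the underlying argument is the same.
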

\begin{proof}
Let $\nabla$ resp. $\nabla^i$ denote the Levi-Civita connection of $h$ resp. $h_i$. 
Using the Koszul formula
$$
2h(\nabla_X Y,Z)=Xh(Y,Z)+Yh(X,Z)-Zh(X,Y)
$$
$$
-h(X,[Y,Z])-h(Y,[X,Z])+h(Z,[X,Y]).
$$
and the expression for $h$, we derive the following formulas, 
relating the Levi-Civita connections $\nabla$ and $\nabla^i$:
\begin{align}
\begin{array}{c}
\nabla_{X_0}Y_0=\nabla^0_{X_0}Y_0,\vspace{1mm}\\
\nabla_{X_0}X_i=\nabla_{X_i}X_0=\frac{\d f_i(X_0)}{f_i}X_i\mbox{ for }i=1,...,m,\vspace{1mm}\\
\nabla_{X_i}Y_i=\nabla^i_{X_i}Y_i-h(X_i,Y_i)\frac{\mathrm{grad}\, f_i}{f_i}\mbox{ for }i=1,...,m,\vspace{1mm}\\
\nabla_{X_i}X_j=0\mbox{ for }i=1,...,m, \,\,\,i\neq j.
\end{array}\label{eq:LC}
\end{align}
Evaluating the curvature tensor $R(X,Y)Z=\nabla_X\nabla_Y Z-\nabla_Y\nabla_X Z-\nabla_{[X,Y]}Z$ 
on the vector fields of various types and using that $h(\mathrm{grad}\,f_i,\mathrm{grad}\,f_j)=|\mathrm{grad}\,\lambda|^2=0$, 
a straight-forward calculation shows that \eqref{eq:curvature} holds and the formulas \eqref{eq:Riccicurvature} follow 
immediately.
\end{proof}
Let us use that the component $(M_1,g_1)$ of $(M,g)$ is Ricci flat. Formula \eqref{eq:Riccicurvature} in 
Lemma \ref{lem:curvature} shows that all components $h_0,h_1,...,h_m$ of $g_1=h$ in \eqref{eq:doublywarped} 
are Ricci flat. Since $3$-dimensional Ricci flat manifolds are flat and, by construction, $g_1$ is nonflat, 
formula \eqref{eq:curvature} shows that at least one of the Ricci flat components $h_i$, $i\geq 1$, 
of $g_1$ in \eqref{eq:doublywarped} is nonflat and therefore must have dimension $\geq 4$. Since there
are at least two components $N_1,N_2$ and $N_0$ is $2$-dimensional, we obtain $\mathrm{dim}\,V_1\geq 8$. 
We claim that this estimate is still too coarse and that instead we actually have
\begin{align}
\mathrm{dim}\,V_1\geq 10.\label{eq:dimV1}
\end{align}
By indecomposability of $(M_1,g_1)$ this follows from
\begin{lem}\label{lem:parallelvf}
Let $(N,h)$ be a simply connected Lorentzian manifold such that all solutions of the system 
\eqref{eq:extsys} for $h$ with $B=0$ have $\mu=0$ and let $(L,\Lambda,0)$ be a solution with $\Lambda$ not 
identically zero. Suppose the metric $h_m$ in the local expression \eqref{eq:doublywarped} from 
Lemma \ref{lem:localclass} is flat.
Let $r$ be the dimension of $N_m$, or equivalently, the multiplicity of the constant eigenvalue $\rho_m$ of $L^\sharp$.

Then, there exist $r$ parallel vector fields $W_1,...,W_r$ on $N$ such that $W_1,...,W_r,\Lambda$ are
linearly independent.  
\end{lem}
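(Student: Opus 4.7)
The plan is to exhibit the $W_i$ explicitly in the local warped-product coordinates provided by Lemma~\ref{lem:localclass}. Writing $f_m:=\lambda+C-\rho_m$, so that $h=h_0+f_1^2 h_1+\cdots+f_m^2 h_m$ and $\mathrm{grad}\,f_m=\Lambda^\sharp$, the connection formulas \eqref{eq:LC} applied to an $h_m$-parallel vector field $X_m$ on $N_m$ (lifted to $N$) give $\nabla_{X_0}X_m=f_m^{-1}(X_0 f_m)X_m$, $\nabla_{X_i}X_m=0$ for $1\le i<m$, and (using $h(Y_m,X_m)=f_m^2 h_m(Y_m,X_m)$) $\nabla_{Y_m}X_m=-f_m\,h_m(Y_m,X_m)\,\Lambda^\sharp$. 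A short computation then shows that the rescaled lift $X_m/f_m$ is parallel in all directions except $N_m$, where $\nabla_{Y_m}(X_m/f_m)=-h_m(Y_m,X_m)\,\Lambda^\sharp$.

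The natural ansatz to cancel this residual obstruction is $W:=X_m/f_m+\phi\,\Lambda^\sharp$. Since $h_m$ is flat and $X_m$ is $h_m$-parallel, the $1$-form $\alpha:=h_m(X_m,\cdot)$ is $h_m$-parallel, hence closed, so locally $\alpha=d\phi$ for a function $\phi$ on $N_m$, which I extend trivially to $N$. Using that $\Lambda^\sharp$ is parallel on $N$ by Lemma~\ref{lem:lambdanull}, one checks directly that $\nabla W=0$: the new contribution $(Y_m\phi)\Lambda^\sharp=h_m(Y_m,X_m)\Lambda^\sharp$ exactly cancels the residual term, while in all other directions $\phi$ is constant and $\Lambda^\sharp$ is parallel.

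Applying the construction to an $h_m$-parallel basis $X_m^{(1)},\ldots,X_m^{(r)}$ of $TN_m$ (which exists because $h_m$ is flat of dimension $r$) produces $r$ locally defined parallel vector fields $W_1,\ldots,W_r$ on $N$; they extend globally by simple connectedness. For linear independence with $\Lambda^\sharp$, observe that the $TN_m$-component of $W_i$ is $X_m^{(i)}/f_m$, while $\Lambda^\sharp\in TN_0$, which is orthogonal to $TN_m$. Hence any relation $\sum c_i W_i+c\,\Lambda^\sharp=0$ projects onto $TN_m$ as $\sum c_i X_m^{(i)}/f_m=0$, forcing $c_i=0$ (since $f_m\neq 0$ and the $X_m^{(i)}$ are independent) and then $c=0$.

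The only mildly subtle step is guessing the correction $\phi\,\Lambda^\sharp$: the factor $1/f_m$ absorbs the warping in the $N_0$-directions, the potential $\phi$ exists only because $h_m$ is flat, and its correction works only because $\Lambda^\sharp$ is itself a parallel null vector field (so that it passes through $\nabla$ and is orthogonal to $TN_m$). Apart from this ansatz, the verification is a direct computation from the connection formulas derived in the proof of Lemma~\ref{lem:curvature}.
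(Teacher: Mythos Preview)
Your proof is essentially the paper's: with $X_m=h_m^{-1}\d\phi$ one has $X_m=f_m^{2}U$ in the paper's notation, and your $W=f_m^{-1}X_m+\phi\,\Lambda^\sharp$ is exactly the paper's $W=f_mU+u\,\Lambda^\sharp$ (with $\phi=u$). The only difference in the local computation is that you verify $\nabla W=0$ directly from the warped-product connection formulas \eqref{eq:LC}, whereas the paper differentiates the eigenvalue relation $(L^\sharp-\rho_m\Id)U=0$; both routes are routine.

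The one step you pass over too quickly is the globalization. The local form \eqref{eq:doublywarped} is only available in a neighbourhood of almost every point, and the $W_i$ built in different such neighbourhoods need not agree, since neither the $h_m$-parallel frame $X_m^{(i)}$ nor the primitives $\phi_i$ are canonical. Simple connectedness of $N$ by itself does not force a parallel vector field defined on an open dense subset to extend. What \emph{is} chart-independent is the rank-$(r+1)$ distribution spanned by your $W_i$ together with $\Lambda^\sharp$, which coincides with $\R\,\Lambda^\sharp\oplus\ker(L^\sharp-\rho_m\Id)$; the paper writes this down intrinsically, notes that it is a parallel flat subbundle of $TN$, and then (now legitimately invoking simple connectedness) concludes that it is trivialized by $r+1$ global parallel sections.
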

Before proving the lemma, we complete the proof of Proposition \ref{prop:Bzeromuzero}. By \eqref{eq:decomp}
and the estimates \eqref{eq:dimVi} and \eqref{eq:dimV1}, we have
$$
n=\mathrm{dim}\,V_0+\mathrm{dim}\,V_1+\mathrm{dim}\,V_2+...+\mathrm{dim}\,V_l\geq \mathrm{dim}\,V_0+5l+5.
$$
Taking into account that $k=\mathrm{dim}\,V_0+1$, this yields $1\leq l\leq [\frac{n+1-k}{5}]-1$. 
From \eqref{eq:degree2} and \eqref{eq:dimPar}, we obtain $D(g)=k(k+1)/2 +l'$ and we have shown that 
$l'=l+1$ is in the range $2\leq l'\leq [\frac{n+1-k}{5}]$. Finally, the estimate \eqref{eq:dimV1}
shows $0\leq \mathrm{dim}\,V_0\leq n-5$, hence, $1\leq k\leq n-4$. This proves Proposition \ref{prop:Bzeromuzero}.
\begin{proof}[Proof of Lemma \ref{lem:parallelvf}]
Actually the statement is a generalization of \cite[Lemma 15(2)]{FedMat} and we will proceed along the same 
line of arguments to give a proof of it. We work in the local picture described by Lemma \ref{lem:localclass} above. 
Let $u$ be a function on $N_m$ such that $\d u$ is parallel and $|\d u|_m=1$, where $|\,.\,|_m$ denotes the length 
of a vector w.r.t. $h_m$. Consider the vector field $U$ on $N$ such that $h(U,X)=u(X)$ for all $X\in TN$. Then,
\begin{align}
h(U,U)=\frac{1}{(\lambda+C-\rho_m)^2}.\label{eq:normU}
\end{align}
Note also that $\nabla U$ is a $h$-symmetric $(1,1)$-tensor on $TN$ and since $U$ takes values in $TN_m$, 
we have $(L^\sharp-\rho_m\Id)(U)=0$ (see Lemma \ref{lem:localclass}(2)). Taking the 
covariant derivative of this equation in the direction of a vector $X\in TN$, inserting \eqref{eq:main} to
replace derivatives of $L^\sharp$ and using $h(\Lambda,U)=0$, we obtain
\begin{align}
(L^\sharp-\rho_m\Id)\nabla_X U=-h(U,X)\Lambda^\sharp.\label{eq:eigvec}
\end{align}
Contracting this with $Y\in TN$ such that $(L^\sharp-\rho\Id)Y=0$ and using symmetries of $\nabla U$, we obtain
$$
(\rho-\rho_m)h(\nabla_Y U,X)=-h(U,X)\Lambda(Y).
$$
Recall from Lemma \ref{lem:localclass}(2) that $L^\sharp(\Lambda^\sharp)=(\lambda+C)\Lambda^\sharp$. Then we have
\begin{align}
\nabla_Y U=0\mbox{ for }Y\in TN_i,\,\,\,i=1,...,m-1,\mbox{ and }Y=\Lambda^\sharp.\label{eq:direc1}
\end{align}
Now let $\tilde \Lambda\in TN_0$ be a vector such that $L(\tilde \Lambda)=(\lambda+C)\tilde \Lambda+\Lambda^\sharp$
(recall that by Lemma \ref{lem:localclass}(2), $L^\sharp|_{TN_0}$ is a Jordan block). Contracting \eqref{eq:eigvec} 
with $\tilde \Lambda$, a straight-forward calculation yields
\begin{align}
\nabla_{\tilde \Lambda} U=-\frac{\Lambda(\tilde\Lambda)}{\lambda+C-\rho_m}U.\label{eq:direc2}
\end{align}

To finally determine $\nabla U$ on a basis of $TN$, let $V$ be another vector tangent to $N_m$. 
Since $U=h^{-1}\d u=\frac{1}{f_m^2}h_m^{-1}\d u$ (where $f_m=\lambda+C-\rho_m$), we have that $f_m^2 U$ 
is a parallel vector field on $N_m$ and using \eqref{eq:LC}, we calculate
$$
2f_m V(f_m)U+f_m^2\nabla_V U=-f_m h(V, U)\Lambda.
$$ 
Hence, since $f_m=\lambda+C-\rho_m$ and $V(f_m)=0$, we obtain
\begin{align}
\nabla_V U=-\frac{1}{\lambda+C-\rho_m}h(V, U)\Lambda^\sharp.\label{eq:direc3}
\end{align}
Now consider the vector field
$$
W=(\lambda+C-\rho_m)U+u\Lambda^\sharp.
$$
By definition, $W$ and $\Lambda^\sharp$ are linearly independent. Using $\Lambda^\sharp=\mathrm{grad}\,\lambda$, $|\Lambda|=0$, 
$\nabla\Lambda^\sharp=0$ and the formulas \eqref{eq:direc1}, \eqref{eq:direc2} and \eqref{eq:direc3}, it is an easy calculation to 
show that the covariant derivative of $W$ vanishes in all possible directions, hence, $W$ is parallel and 
linearly independent of $\Lambda^\sharp$. However, we have defined such a $W$ only in a neighbourhood of almost
every point of $N$. Actually, what we have shown above is the existence of parallel vector fields $W_1,...,W_r$, 
where $r=\mathrm{dim}\,N_m$, defined in a neighbourhood of almost every point, such that $\Lambda^\sharp,W_1,...,W_r$ are 
linearly independent. To see this, we use that $h_m$ is flat and choose a basis of parallel $1$-forms $\d u_1,...,\d u_r$ 
of $N_0$ such that $|\d u_i|_m=1$ for $i=1,...,r$. As shown above, the vector fields 
$W_i=(\lambda+C-\rho_m)U_i+u_i\Lambda^\sharp$, $i=1,...,r$, where $U_i=h^{-1}\d u_i$, will satisfy the claim.

Thus, we have defined a distribution $\tilde D=\mathrm{span}\{\Lambda,W_1,...,W_r\}$ of rank $r+1$ on a dense and open subset 
of $N$. We claim $\tilde D$ extends to a smooth distribution $D$ on the whole $N$. Let $E_i(p)$, $i=1,...,m$, denote the 
generalized eigenspace of $L$ at $p\in N$ corresponding to the constant eigenvalue $\rho_i$. Then we define
$$
D_p=\{X\in T_p N:X\perp E_i,\,\,\,i=1,...,m-1,\,\,\,X\perp \Lambda^\sharp\}
$$
in points $p\in N$ where $(\lambda+C)(p)\neq \rho_i$, $i=1,...,m-1$, and
$$
D_p=\R\cdot \Lambda^\sharp(p)\oplus E_m(p)
$$
for $(\lambda+C)(p)\neq \rho_m$. Then, $D=\bigsqcup_{p\in N}D_p$ is a smooth distribution of rank $r+1$ which
coincides with the parallel and flat distribution $\tilde D$ on a dense and open subset. Then, $D$ is a parallel
and flat subbundle of $TN$. This finishes the proof of the lemma.
\end{proof}

\subsection{Realization of the values of the degree of mobility}
\label{sec:realization}
In this section, we show that for each $n\geq 3$, the values from Theorem \ref{thm:main} can be realized as the 
degree of mobility of an $n$-dimensional Riemannian resp. Lorentzian Einstein metric which admits a projectively
equivalent metric that is not affinely equivalent. This will complete the proof of Theorem \ref{thm:main}. 
We may suppose that $n\geq 5$ since the values of Theorem \ref{thm:main} for $n=3,4$ are realized by the simply 
connected spaces of constant sectional curvature.

\medskip
We will proceed by constructing a Ricci flat local cone $(\hat M,\hat g)$ of suitable signature
and of dimension $n+1$ such that the space of parallel symmetric $(0,2)$-tensors of $\hat g$ has dimension $k(k+1)/2+l$
, where the range of integers $k,l$ is as in Theorem \ref{thm:main}. 
Once such a manifold is constructed, we have by Lemma \ref{lem:curvfacts} and Lemma \ref{lem:localcone}
that $(\hat M,\hat g)$ is (locally) the metric cone over a $n$-dimensional Einstein manifold 
and, in view of Lemma \ref{lem:isom}, the degree of mobility of $(M,g)$ is given by $k(k+1)/2+l$. Moreover, 
as can be seen directly from the second and third equations in \eqref{eq:extsys-1}, any $L\in \mathcal{A}(g)$
that is parallel (that is, we have $\Lambda=0$ for the corresponding vector field) is necessarily proportional 
to the identity. In particular, $(M,g)$ admits a metric projectively equivalent to $g$ and not affinely 
equivalent to it. 

The Ricci flat cone $(\hat M,\hat g)$ will be constructed by taking a direct product of cones. It is therefore
useful to note the following: for any dimension $d+1\geq 5$, there is a Ricci flat nonflat indecomposable
cone of any signature $(r,s+1)$ (where $d=r+s$). By Lemma \ref{lem:curvfacts}, such a cone is obtained by taking the 
metric cone over a generic $d$-dimensional Einstein metric of scalar curvature $d(d-1)$ and signature $(r,s)$.

\medskip 
We will consider two different cases corresponding respectively to the values from the list of 
Theorem \ref{thm:main} attained by Riemannian \emph{and} Lorentzian Einstein metrics and to 
the special values only obtained by Lorentzian Einstein metrics.

\emph{1. Case: Let $0\leq k\leq n-4$ and $1\leq l\leq [\frac{n+1-k}{5}]$}.
Let $M_0=\R^k$ with standard flat euclidean metric $g_0$. Clearly, $(M_0,g_0)$ is a cone over 
the $k-1$-dimensional sphere with standard metric. Since $l\leq [(n+1-k)/5]$, there exist numbers $d_1,...,d_l$ such that 
$d_i\geq 5$ for $i=1,...,l$ and $d_1+...+d_l=n+1-k$. For each $i=1,...,l$, we take $d_i$-dimensional nonflat Ricci flat 
indecomposable cones $(M_i,g_i)$ such that $g_1,...,g_{l-1}$ are positive definite. If we want $g$ to be Riemannian,
we also let $g_l$ be positive definite. If we want $g$ to be Lorentzian, we let $g_l$ be the metric cone over a Lorentzian
Einstein metric. Then, the direct product
$$
(\hat M,\hat g)=(M_0,g_0)\times ( M_1, g_1)\times ...\times ( M_l, g_l)
$$
has Lorentzian signature and the space of parallel symmetric $(0,2)$-tensors 
has dimension $k(k+1)/2+l$. By  Lemma \ref{lem:isom}, Lemma \ref{lem:curvfacts} and Lemma \ref{lem:localcone},
$(\hat M,\hat g)$ is (locally) the metric cone over a $n$-dimensional Einstein manifold $(M,g)$ with 
degree of mobility $D(g)=k(k+1)/2+l$.

\medskip 
\emph{2. Case: Let $2\leq k\leq n-3$, $k=n-3$ mod $5$ and $l= [\frac{n+2-k}{5}]$}.
We let $M_0=\R^{k-2}$ with standard flat euclidean metric $g_0$. 
Since $l-1= [\frac{n-3-k}{5}]$, we find numbers $d_1,...,d_{l-1}\geq 5$ such that $d_1+...+d_{l-1}=n-3-k$.
Let $(M_i,g_i)$, $i=1,...,l-1$, be $d_i$-dimensional nonflat Ricci flat indecomposable cones of Riemannian signature.
Let $(M_l,g_l)$ be the $6$-dimensional cone of signature $(4,2)$ from Example \ref{ex:counterex}.
Consider the $n+1$-dimensional manifold 
$$
(\hat M,\hat g)=(M_0,-g_0)\times ( M_1, -g_1)\times ...\times ( M_{l-1}, -g_{l-1})\times ( M_l, g_l).
$$
of signature $(n-1,2)$. By construction, it has the property that the space of parallel symmetric $(0,2)$-tensors 
has dimension $k(k+1)/2+l$. For $i=0,...,l$ let us write $(M_i,g_i)$ in the form $M_i=\R_{>0}\times N_i$ 
and $g_i=\d r_i^2+r_i^2 h_i$. 
We consider the subset $\hat M^0=\{-r_0^2-r_1^2-...-r_{l-1}^2+r_l^2>0\}\subseteq \hat M$ of points where
the cone vector field $\xi=\sum_{i=0}^l \xi_i$ of $(\hat M,\hat g)$ 
($\xi_i=r_i\partial_{r_i}$ denoting the cone vector fields for $g_i$) 
has the property that $\hat g(\xi,\xi)>0$. As above, we have that, locally, in a neighborhood 
of almost every point of $\hat M^0$, $\hat g$ is the metric cone over an Einstein metric $g$ of 
signature $(n-1,1)$ such that $D(g)=k(k+1)/2+l$.

\section{Proof of Theorem \ref{thm:proj}}
\label{sec:proofproj}

In this section, we give the proof of Theorems \ref{thm:proj} and \ref{thm:proj2}. 
Let $(M,g)$ be an $n$-dimensional pseudo-Riemannian manifold and let $v$ be a projective vector field for $g$. 
It is straight-forward to show that the symmetric $(0,2)$-tensor
\begin{align}
\varphi(v):=\mathcal{L}_v g-\frac{1}{n+1}\mathrm{trace}(\mathcal{L}_v g)^\sharp\label{eq:phiv}
\end{align}
is a solution of \eqref{eq:main}, hence, we have a linear mapping $\varphi:\mathfrak{p}(g)\rightarrow \mathcal{A}(g)$,
where $\mathfrak{p}(g)$ denotes the Lie algebra of projective vector fields.
Using \eqref{eq:phiv}, one easily concludes (see \cite[Lemma 16]{FedMat}) that $\varphi(v)$ is proportional 
to the metric $g$, if and only if $v$ is a homothety (that is, $\mathcal{L}_v g=cg$ for some constant $c$). 
Then, denoting by $\mathfrak{h}(g)$ the Lie algebra of homotheties of $g$, we obtain an induced linear injection 
of quotient spaces
\begin{align}
\varphi:\mathfrak{p}(g)/\mathfrak{h}(g)\rightarrow \mathcal{A}(g)/\R\cdot g,\label{eq:injective}
\end{align}
in particular,
\begin{align}
\mathrm{dim}\left(\mathfrak{p}(g)/\mathfrak{h}(g)\right)\leq D(g)-1.\label{eq:ineq}
\end{align}

Let $g$ be an Einstein metric and assume moreover, that there exists a nonparallel $L\in \A(g)$. 
By Theorem \eqref{thm:extsys}, the degree of mobility $D(g)$ of $g$
equals the dimension of the space of solutions of \eqref{eq:extsys}. As in the proof of 
Theorem \ref{thm:main}, we have to consider different cases according to value of the 
scalar curvature of $g$.

\subsection{The case of nonzero scalar curvature and the realization part of Theorem \ref{thm:proj}} 
\label{sec21}

Let us prove Theorem \ref{thm:proj} under the assumption that the scalar curvature of $g$ is nonzero
(see \cite[Section 8.3]{FedMat} for details): 
using that the constant $B=-\mathrm{Scal}/n(n-1)$ in \eqref{eq:extsys} is nonzero, one shows that any 
homothety for $g$ is actually a Killing vector field, hence, $\mathfrak{h}(g)$ coincides with 
$\mathfrak{i}(g)$, the Lie algebra of Killing vector fields of $g$. Using the equations from the system 
\eqref{eq:extsys} it is straight-forward to show that the injective mapping $\varphi$ in \eqref{eq:injective} 
is actually an isomorphism, hence, $\mathrm{dim}\left(\mathfrak{p}(g)/\mathfrak{i}(g)\right)= D(g)-1$.
Applying Theorem \ref{thm:main} to obtain the values for $D(g)$, we obtain the corresponding values
for the dimension of the space $\mathfrak{p}(g)/\mathfrak{i}(g)$ of essential projective vector fields
from Theorem \ref{thm:proj}. 

The realization part of Theorem \ref{thm:main} also shows that each number from the list of Theorem \ref{thm:proj} 
can actually be realized as the dimension of the space of essential projective vector fields for a certain 
Riemannian resp. Lorentzian Einstein metric. This proves the realization part of Theorem \ref{thm:proj}. 

Let us turn to the prove of Theorem \ref{thm:proj2} in case of nonzero scalar curvature. Let $g$
be an Einstein metric of arbitrary signature and with nonzero scalar curvature which admits a 
projectively equivalent metric that is not affinely equivalent. Let $(L,\Lambda,\mu)$ be a solution 
of \eqref{eq:extsys} such that $\Lambda\neq 0$. 
It is wellknown that for $B\neq 0$, $\Lambda^\sharp$ is an essential projective vector field for $g$
which proves Theorem \ref{thm:proj2}. For completeness let us show how to verify this fact:
we have
$$
\mathcal{L}_{\Lambda^\sharp}g=2\nabla\Lambda=2\mu g+2BL,
$$
hence,
$$
\mathrm{trace}(\mathcal{L}_{\Lambda^\sharp}g)^\sharp=2n\mu +4B\lambda,
$$
where $\lambda=\frac{1}{2}\mathrm{trace}\,L^\sharp$. Since $\d\lambda=\Lambda$ and $\d \mu=2B\Lambda$,
we have that $\mu-2B\lambda$ is equal to a constant. Using this, we obtain
\begin{align}
\mathcal{L}_{\Lambda^\sharp} g-\frac{1}{n+1}\mathrm{trace}(\mathcal{L}_{\Lambda^\sharp} g)^\sharp
=2BL-C g\in \A(g),\label{eq:splitting}
\end{align}
where $C$ is a certain constant. This shows that $\Lambda^\sharp$ is an essential projective vector field
(compare \eqref{eq:phiv}) and proves Theorem \ref{thm:proj2} for nonzero scalar curvature. 

\begin{rem}\label{rem:essprojvf}
We see from \eqref{eq:splitting} that the mapping 
$$
s:\A(g)/\R\cdot g\rightarrow \mathfrak{p}(g)/\mathfrak{i}(g),
$$
defined by sending $L\in \A(g)$ to the corresponding vector field $\frac{1}{2B}\Lambda^\sharp$, 
is a splitting of the exact sequence 
$$
0\rightarrow \mathfrak{i}(g)\hookrightarrow \mathfrak{p}(g)\overset{\varphi}{\rightarrow }\A(g)/\R\cdot g,
$$
that is $\varphi\circ s=\Id$. In particular, the space of essential projective vector fields 
$\mathfrak{p}(g)/\mathfrak{i}(g)$ can be identified with a subspace of $\mathfrak{p}(g)$
(which is not a subalgebra) and each projective vector field for $g$ is of the form  
$\Lambda+K$, where $K$ is a Killing vector field.
\end{rem}

\subsection{The case of zero scalar curvature and $\mu\neq 0$ for at least one solution of \eqref{eq:extsys}}

The proof of Theorem \ref{thm:proj} under the assumption that $B=-\mathrm{Scal}/n(n-1)=0$ in the system \eqref{eq:extsys} 
and at least one solution has $\mu\neq 0$ can be traced back to the case $B\neq 0$ treated in the  
previous section. We first recall some invariance properties.
\begin{lem}\label{lem:invariance}
We have 
$\mathrm{dim}(\mathfrak{p}(g)/\mathfrak{i}(g))=\mathrm{dim}(\mathfrak{p}(\bar g)/\mathfrak{i}(\bar g))$
for any pair of projectively equivalent metrics $g,\bar g$.
\end{lem}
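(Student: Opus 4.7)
\textbf{Plan for Lemma \ref{lem:invariance}.} The starting observation is that the notion of projective vector field is intrinsic to the projective equivalence class of the Levi-Civita connection: since by hypothesis $g$ and $\bar g$ share the same unparametrized geodesics, a diffeomorphism preserves them for $g$ if and only if it preserves them for $\bar g$, whence $\mathfrak{p}(g)=\mathfrak{p}(\bar g)$. Denote this common Lie algebra by $\mathfrak{p}$; both $\mathfrak{i}(g)$ and $\mathfrak{i}(\bar g)$ are subalgebras of $\mathfrak{p}$, and the lemma reduces to the equality $\dim \mathfrak{i}(g)=\dim \mathfrak{i}(\bar g)$.

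To establish this, I would exploit the linear map $\varphi_g:\mathfrak{p}\to \A(g)$ defined by \eqref{eq:phiv}, together with the analogous map $\varphi_{\bar g}:\mathfrak{p}\to \A(\bar g)$. As recalled right after \eqref{eq:phiv}, each map has kernel equal to the corresponding homothety algebra. The target spaces $\A(g)$ and $\A(\bar g)$ are canonically isomorphic, because the correspondence \eqref{eq:defL} identifies nondegenerate solutions of \eqref{eq:main} with metrics projectively equivalent to the background metric, and this set of metrics is the same whether we use $g$ or $\bar g$ as background; in particular $D(g)=D(\bar g)$. Under this canonical isomorphism $\A(g)\cong \A(\bar g)$, I expect the two maps $\varphi_g$ and $\varphi_{\bar g}$ to intertwine modulo a constant shift corresponding to the distinguished element $L(\bar g,g)$. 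Granting this, the images of $\varphi_g$ and $\varphi_{\bar g}$ have equal dimension, hence so do their kernels, yielding $\dim \mathfrak{h}(g)=\dim \mathfrak{h}(\bar g)$.

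To lift this from homothety algebras to isometry algebras, recall that $\mathfrak{h}(g)/\mathfrak{i}(g)$ is at most one-dimensional, its dimension recording the existence of a proper (non-Killing) homothety. The presence of such a homothety can be detected directly from the system \eqref{eq:extsys}: a proper homothety of $g$ corresponds to an element $v\in \mathfrak{p}\setminus \mathfrak{i}(g)$ for which $\varphi_g(v)$ is proportional to $g$, a condition that translates under the canonical isomorphism $\A(g)\cong \A(\bar g)$ to the parallel condition $\varphi_{\bar g}(v)\in \R\cdot\bar g$. This shows $\dim(\mathfrak{h}(g)/\mathfrak{i}(g))=\dim(\mathfrak{h}(\bar g)/\mathfrak{i}(\bar g))$, and combining with the previous step gives the desired equality.

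The main obstacle is the compatibility step in the second paragraph: one must verify that the "trace correction'' $-\frac{1}{n+1}\mathrm{trace}(\mathcal{L}_v g)^{\sharp}g$ in \eqref{eq:phiv} matches exactly the Lie derivative of the density factor $|\det \bar g/\det g|^{1/(n+1)}$ in \eqref{eq:defL} under the identification $\A(g)\cong \A(\bar g)$. This is a direct but somewhat delicate computation involving the formula for $\mathcal{L}_v\det(\bar g g^{-1})$ and the fact that the weight $1/(n+1)$ is precisely the one which renders \eqref{eq:main} projectively covariant. Once this compatibility is established the remaining steps are routine linear algebra, and the lifting from $\mathfrak{h}$ to $\mathfrak{i}$ is an easy consequence.
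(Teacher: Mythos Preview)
Your opening step is correct and matches the paper: since $g$ and $\bar g$ share unparametrized geodesics, $\mathfrak{p}(g)=\mathfrak{p}(\bar g)$, and the lemma reduces to $\dim\mathfrak{i}(g)=\dim\mathfrak{i}(\bar g)$.

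However, there is a genuine error in your second paragraph. You write that $\varphi_g$ has kernel equal to the homothety algebra $\mathfrak{h}(g)$, citing the sentence after \eqref{eq:phiv}. That sentence says something different: it says $\varphi_g(v)$ is \emph{proportional to} $g$ (not zero) if and only if $v$ is a homothety, i.e.\ $\varphi_g^{-1}(\R\cdot g)=\mathfrak{h}(g)$. The actual kernel of $\varphi_g$ is $\mathfrak{i}(g)$: if $\varphi_g(v)=0$ then $\mathcal{L}_v g=f g$ with $f=\tfrac{1}{n+1}\mathrm{trace}(\mathcal{L}_v g)^\sharp$, and taking the $g$-trace gives $(n+1)f=nf$, so $f=0$ and $v$ is Killing. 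Once you correct this, your entire ``lifting'' paragraph becomes unnecessary (and its argument that a proper homothety of $g$ maps to a proper homothety of $\bar g$ under the intertwining is in any case not justified; $\mathfrak{h}(g)$ and $\mathfrak{h}(\bar g)$ need not coincide as subspaces of $\mathfrak{p}$).

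With the kernel corrected, your strategy does go through, but it is considerably heavier than the paper's. The paper simply observes that the Killing equation, viewed as an equation on $1$-forms of a suitable projective weight, is projectively invariant (this is the content of the reference \cite{East}); hence its solution space does not depend on which metric in the projective class one uses to trivialize the weight, and $\dim\mathfrak{i}(g)=\dim\mathfrak{i}(\bar g)$ follows in one line. Your ``compatibility'' computation---matching the trace correction in \eqref{eq:phiv} with the density factor in \eqref{eq:defL}---is essentially a by-hand verification of (a consequence of) this same projective invariance, so the two approaches are cousins; but the paper avoids the computation entirely by quoting the invariance directly.
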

\begin{proof}
By definition of a projective vector field, we have $\mathrm{dim}(\mathfrak{p}(g))=\mathrm{dim}(\mathfrak{p}(\bar g))$
On the other hand, since the defining equation for a Killing vector field is projectively invariant
(when we view it as an equation on weighted $1$-forms, see \cite{East}), 
we also have $\mathrm{dim}(\mathfrak{i}(g)=\mathrm{dim}(\mathfrak{i}(\bar g))$ and the claim follows.
\end{proof}

By Lemma \ref{lem:changeofmetric}, on each open simply connected subset $U$ of $M$
with compact closure, there exists a metric $\bar g$ having the same signature as $g$ and being
projectively equivalent to $g$ such that $\bar B\neq 0$ for the corresponding constant in the system \eqref{eq:extsys} 
for $\bar g$. By Lemma \ref{lem:projequivEinstein}, also $\bar g$ is an Einstein metric. It follows from 
Lemma \ref{lem:invariance} and the results of Section \ref{sec21} that for each simply connected open 
subset $U$ with compact closure, $\mathrm{dim}(\mathfrak{p}(g|_U)/\mathfrak{i}(g|_U))$ is given by one of the 
values from the list of Theorem \ref{thm:proj}. 
However, it is a classical fact that Killing vector fields can be viewed equivalently 
as parallel sections on a certain vector bundle. The same is true for the projective vector fields of $g$ 
(since they are the symmetries of the projective geometry determined by the Levi-Civita connection of $g$ 
\cite{CapMel,CapMel2,Melnick} and general facts about parabolic (projective) geometries assure the existence 
of a prolongation connection \cite{Hammerl}). 
Then, the proof of Theorem \ref{thm:proj} under the assumptions $B=0$ but $\mu\neq 0$ for at least one solution
of \eqref{eq:extsys} follows from a standard application of the Ambrose-Singer theorem \cite{AmbSing}, 
see also Lemma \ref{lem:AmbSing} and its proof in \cite[Lemma 10]{MatRos}.

In the same way one proves Theorem \ref{thm:proj2} for an Einstein metric of arbitrary signature with vanishing 
scalar curvature which admits a solution $(L,\Lambda,\mu)$ of \eqref{eq:extsys} such that $\mu\neq 0$: arguing as 
above (using Lemma \ref{lem:changeofmetric} and Lemma \ref{lem:projequivEinstein}), the already proven part of 
Theorem \ref{thm:proj2} for nonzero scalar curvature (see Section \ref{sec21})
implies that the restriction $g|_U$ of $g$ to any open simply connected subset $U$ with compact closure has
$\mathrm{dim}(\mathfrak{p}(g|_U)/\mathfrak{i}(g|_U))\geq 1$, hence, admits an essential projective vector field.
A standard application of the Ambrose-Singer theorem yields the desired result for $g$.

\subsection{The case of zero scalar curvature and $\mu= 0$ for all solutions of \eqref{eq:extsys}}

Let $(M,g)$ be a simply connected Lorentzian manifold such that every solution of 
the system \eqref{eq:extsys} with $B=0$ has $\mu=0$ and $\Lambda\neq 0$ for at least one solution 
(recall from Remark \ref{rem:Bzeromuzero} that the situation under consideration is exclusive for Lorentzian signature). 
By \cite[Corollary 3]{FedMat}, we have that $\mathfrak{p}(g)=\mathfrak{i}(g)$. Thus, 
$\mathrm{dim}\left(\mathfrak{p}(g)/\mathfrak{i}(g)\right)\leq D(g)-1$
by \eqref{eq:ineq}. It is shown in \cite[Section 8.4.2]{FedMat} that we also have 
$D(g)-2\leq \mathrm{dim}\left(\mathfrak{p}(g)/\mathfrak{i}(g)\right)$, hence
$$
D(g)-2\leq \mathrm{dim}\left(\mathfrak{p}(g)/\mathfrak{i}(g)\right)\leq D(g)-1.
$$
Using Proposition \ref{prop:Bzeromuzero}, we obtain 
$$
\frac{k(k+1)}{2}+l'-2\leq \mathrm{dim}\left(\mathfrak{p}(g)/\mathfrak{i}(g)\right)\leq \frac{k(k+1)}{2}+l'-1,
$$
where $1\leq k\leq n-4$ and $2\leq l'\leq [\frac{n+1-k}{5}]$. Thus, 
$\mathrm{dim}\left(\mathfrak{p}(g)/\mathfrak{i}(g)\right)=k(k+1)/2+l-1$,where $l=l'$ or $l=l'-1$. 
Then, $\mathrm{dim}\left(\mathfrak{p}(g)/\mathfrak{i}(g)\right)=k(k+1)/2+l-1$,
where $1\leq l\leq [\frac{n+1-k}{5}]$. This proves Theorem \ref{thm:proj} under the assumptions $B=0$
and $\mu=0$ for all solutions of \eqref{eq:extsys}.

Finally, let us prove Theorem \ref{thm:proj2} for an Einstein metric of arbitrary signature with vanishing 
scalar curvature such that $\mu=0$ for every solution of \eqref{eq:extsys} but $\Lambda\neq 0$ for at least
one solution $(L,\lambda,0)$. Let $\lambda=\frac{1}{2}\mathrm{trace}(L^\sharp)$ such that $\d\lambda=\Lambda$.
Then, since $\Lambda$ is parallel, $\nabla v^\flat=\Lambda\otimes \Lambda$ for the vector field $v=\lambda\Lambda^\sharp$, 
hence,
$$
\mathcal{L}_v g-\frac{1}{n+1}\mathrm{trace}(\mathcal{L}_v g)^\sharp g=2\Lambda\otimes \Lambda-\frac{2g(\Lambda,\Lambda)}{n+1} g.
$$
Since $g(\Lambda,\Lambda)$ is a constant, this symmetric $(0,2)$-tensor is clearly contained in $\A(g)$. It follows that
$v$ is a projective vector field. Moreover, $v$ is essential since it is not an isometry (thought, $v$ is an affine vector field).

\subsection*{\bf Acknowledgements.} We thank Deutsche Forschungsgemeinschaft (Research training group   1523 --- 
Quantum and Gravitational Fields) and FSU Jena for partial financial support.

\nocite{*}
\bibliographystyle{plain}

\end{document}